\documentclass[final,3p, review,11pt]{elsarticle}

\usepackage{amssymb}
\usepackage{natbib}
\usepackage{graphicx}
\usepackage[usenames,dvipsnames]{color}
\usepackage{amsmath}
\usepackage{bm}
\usepackage{multirow}
\usepackage{hyperref}
\newcommand{\Bezier}{B\'{e}zier\ }
\newcommand{\D}{\displaystyle}
\newcommand{\T}{\textstyle}
\setlength{\parindent}{0.5pc}
\newlength\figwidth
\setlength\figwidth{0.4\textwidth}%
\setlength{\parindent}{.1 \figwidth}

%% The amsthm package provides extended theorem environments
\usepackage{amsthm}

\renewcommand\figurename{Fig. }
\makeatletter
\renewcommand{\fnum@figure}[1]{\textbf{\figurename\thefigure.} }
\makeatother

\hypersetup{colorlinks=true}

\newtheorem{lemma}{Lemma}[section]

\newtheorem{proposition}[lemma]{Proposition}

\def\pb{\textbf{\emph{P}}}
\def\ub{\mathbf{u}}

\def\Pc{\mathcal{P}}
\def\Sc{\mathcal{S}}

\usepackage[switch]{lineno} 

\journal{Computer-Aided Design}

\begin{document}

%\linenumbers

\begin{frontmatter}

\title{Isogeometric computation reuse method for complex objects with
topology-consistent volumetric parameterization}
 \author[A]{Gang Xu \corref{cor}}\ead{gxu@hdu.edu.cn}
 \author[B,C]{Tsz-Ho Kwok}  
 \author[B,D]{Charlie C.L. Wang }\ead{c.c.wang@tudelft.nl}
 
 \cortext[cor]{Corresponding author.}
 \address[A]{School of Computer Science and Technology, Hangzhou Dianzi University, Hangzhou 310018, P.R.China}
 \address[B]{Department of Mechanical and Automation Engineering, 
The Chinese University of Hong Kong, Hong Kong} 
\address[C]{Department of Mechanical and Industrial Engineering,
Concordia University, Montreal, Quebec, Canada} 
 \address[D]{Department of Design Engineering, 
Delft University of Technology, The Netherlands} 
 
\begin{abstract}
Volumetric spline parameterization and computational efficiency are
two main challenges in \textit{isogeometric analysis} (IGA). To tackle
this problem, we propose a framework of \textit{computation reuse} in
IGA on a set of three-dimensional models with similar semantic
features. Given a template domain, B-spline based consistent
volumetric parameterization is first constructed for a set of models
with similar semantic features. An efficient quadrature-free method is investigated in
our framework to compute the entries of stiffness matrix by \Bezier extraction and polynomial approximation. In our approach, evaluation on the stiffness matrix and imposition of the boundary conditions can be pre-computed and reused during IGA on a set of CAD models. Examples with complex geometry are presented to show the effectiveness of our methods, and efficiency similar to the computation in linear finite element analysis can be achieved for IGA taken on a set of models.
\end{abstract}

\begin{keyword}
computation reuse; isogeometric analysis; consistent volume parameterization 
\end{keyword}

\end{frontmatter}

\section{Introduction}
The \textit{isogeometric analysis} (IGA) approach, which was proposed
by Hughes et al. \cite{hughes:CMAME 2005}, offers the possibility of
seamless integration of computational analysis and geometric
design. Two major challenges in the current development of IGA are
volumetric parameterization and computational efficiency. In the
recent book of Cottrell and Hughes~\cite{Cottrell:igabook}, it has
been pointed out that the most significant challenges towards
isogeometric analysis is how to construct analysis-suitable
parameterizations from given CAD models. On the other hand, high-order
basis functions are often employed to achieve smooth solution fields
with high continuity, which however also increases the computational
cost when filling stiffness matrices. In this paper, we propose a
method for \textit{computation reuse} in IGA on three-dimensional
models with similar semantic features, by which the computational
efficiency can be significantly improved.  Applications in
computational design that can be benefit from this research include: 
1) the physical analysis on a family of products having the same
topology but different shapes; 2) using as the inner loop of
physics-based shape optimization, where the computation can be greatly 
speeded up after applying a complete IGA in the first iteration. 

\begin{figure}[t]
\centering
\begin{minipage}[t]{0.24\textwidth}
\centering
\includegraphics[width=\textwidth]{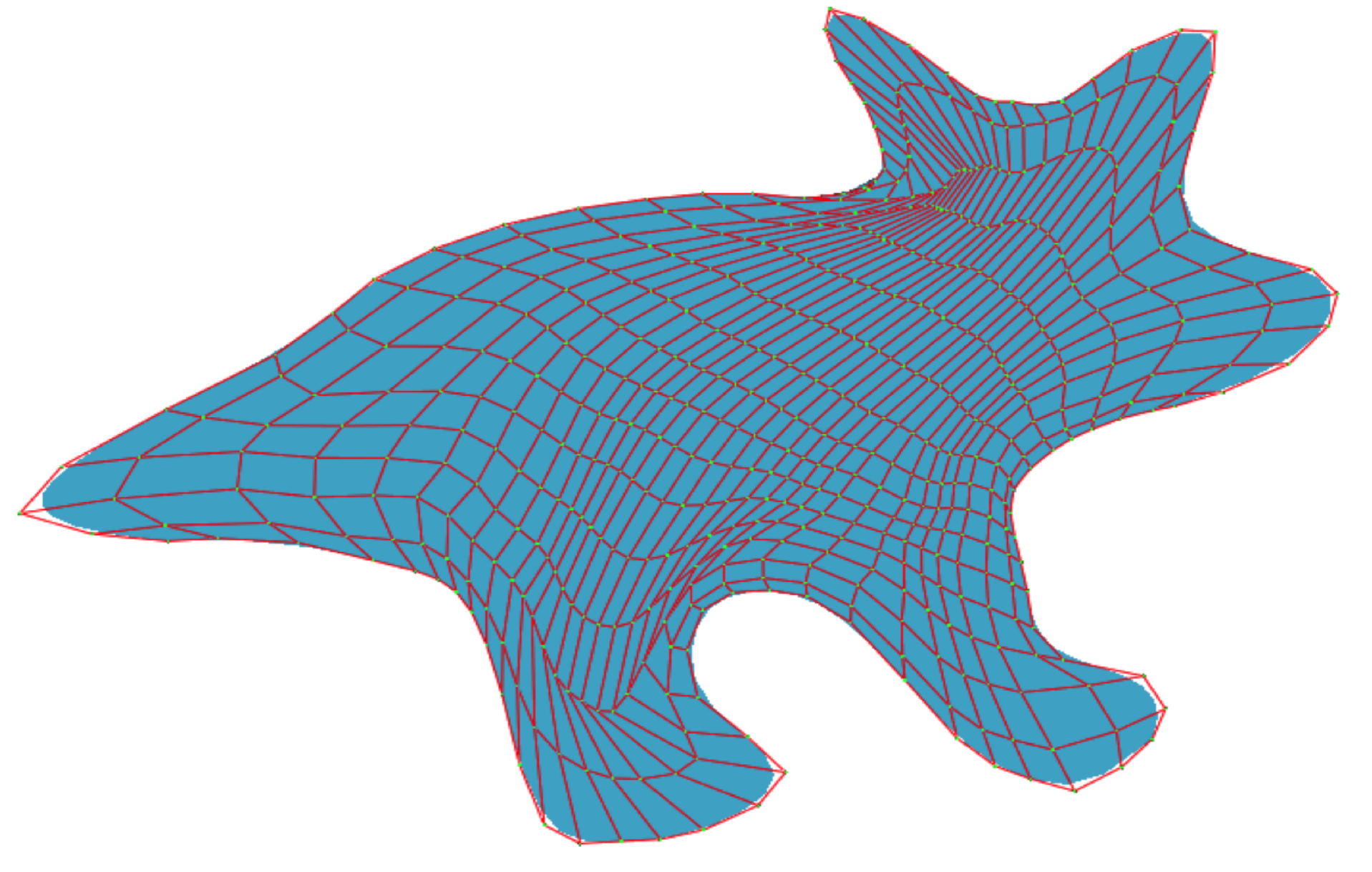}
\\ (a) model A
\end{minipage}
\begin{minipage}[t]{0.24\textwidth}
\centering
\includegraphics[width=.9\textwidth]{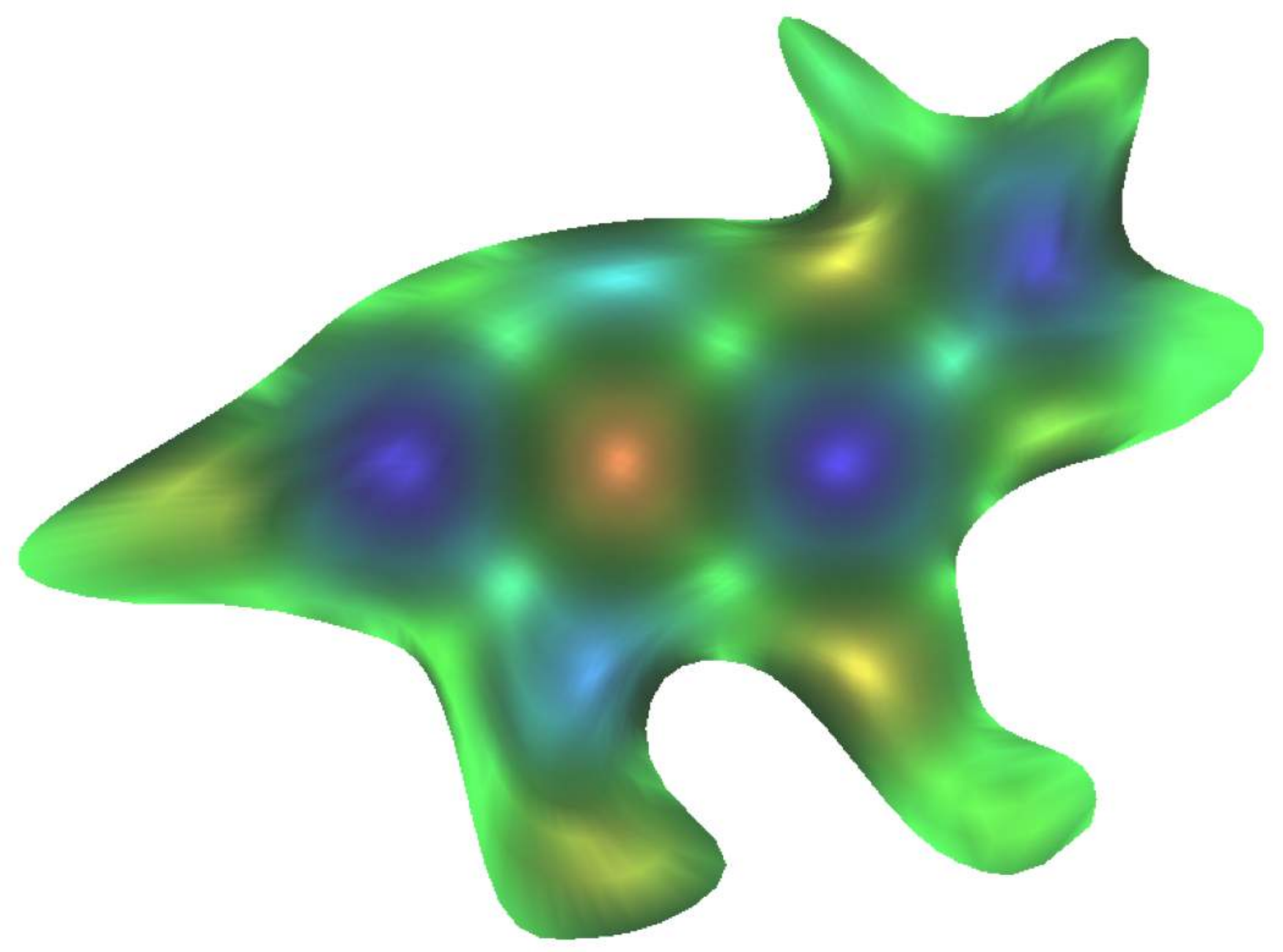}
\\ (b)IGA result of (a)
\end{minipage}
\begin{minipage}[t]{0.24\textwidth}
\centering
\includegraphics[width=.72\textwidth]{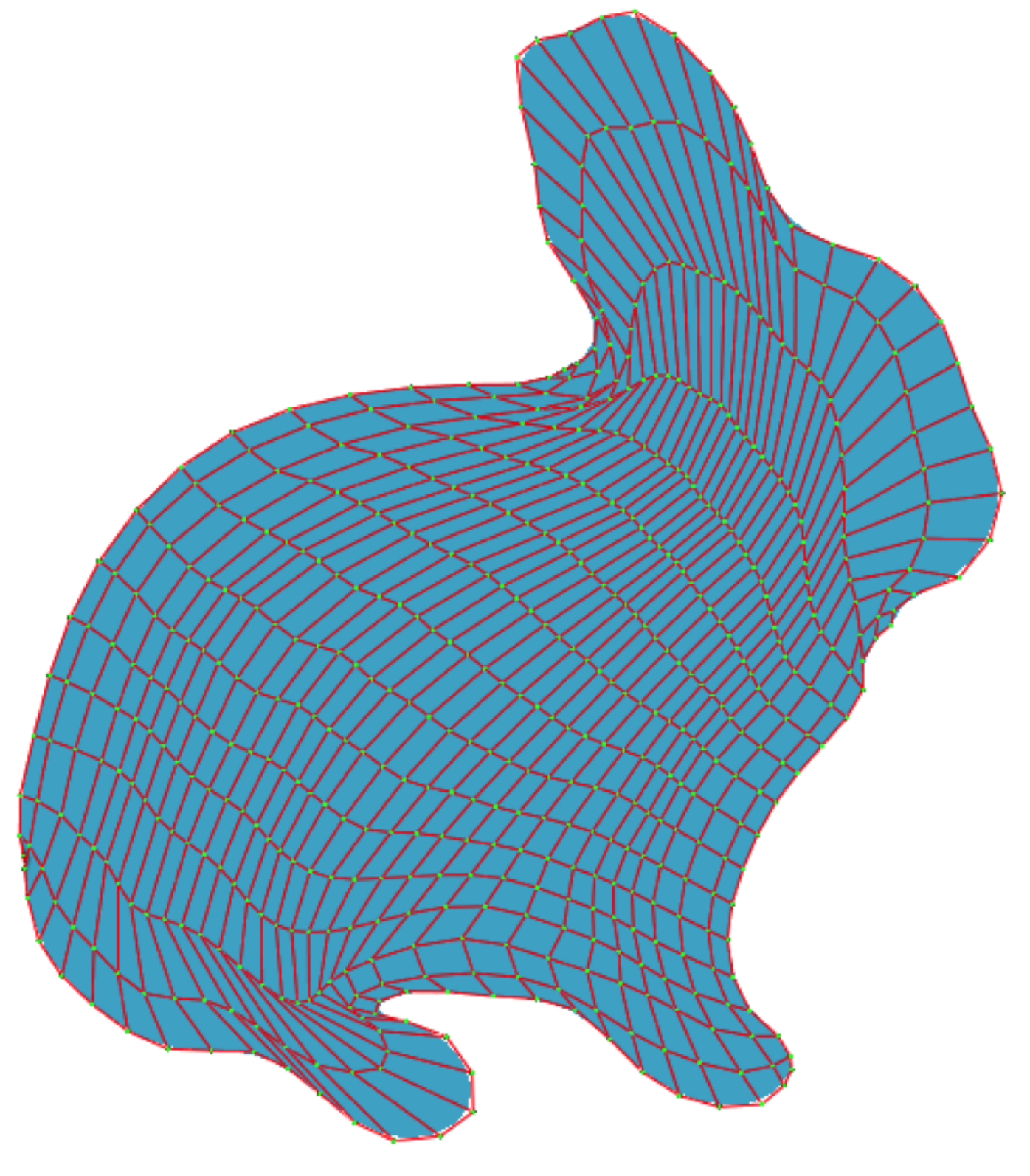}
\\ (c) model B
\end{minipage}
\begin{minipage}[t]{0.24\textwidth}
\centering
\includegraphics[width=\textwidth]{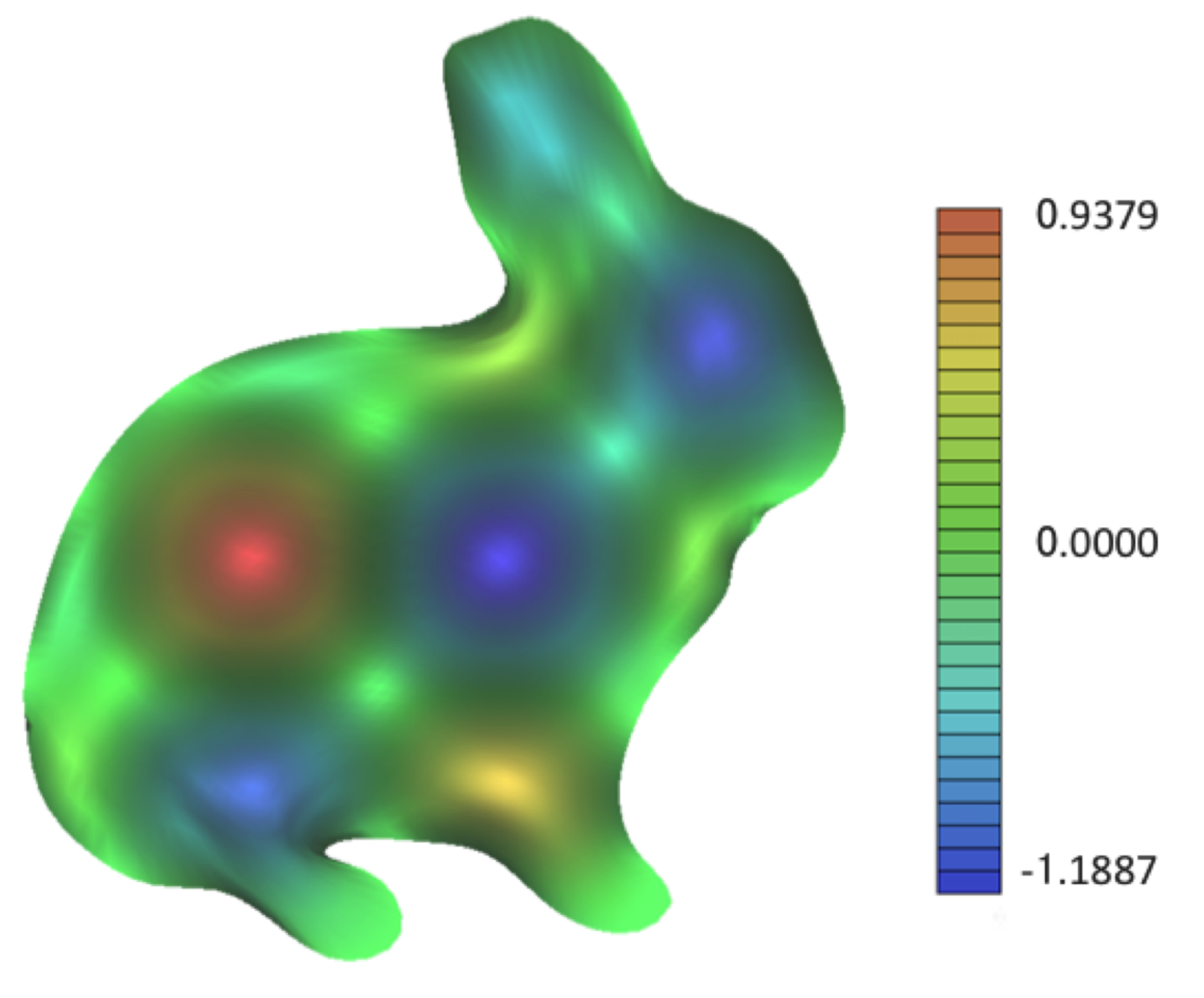}
\\ (d) IGA solution of (c)
\end{minipage}\\
\begin{minipage}[t]{1.5in}
\centering
\includegraphics[width=1.1in]{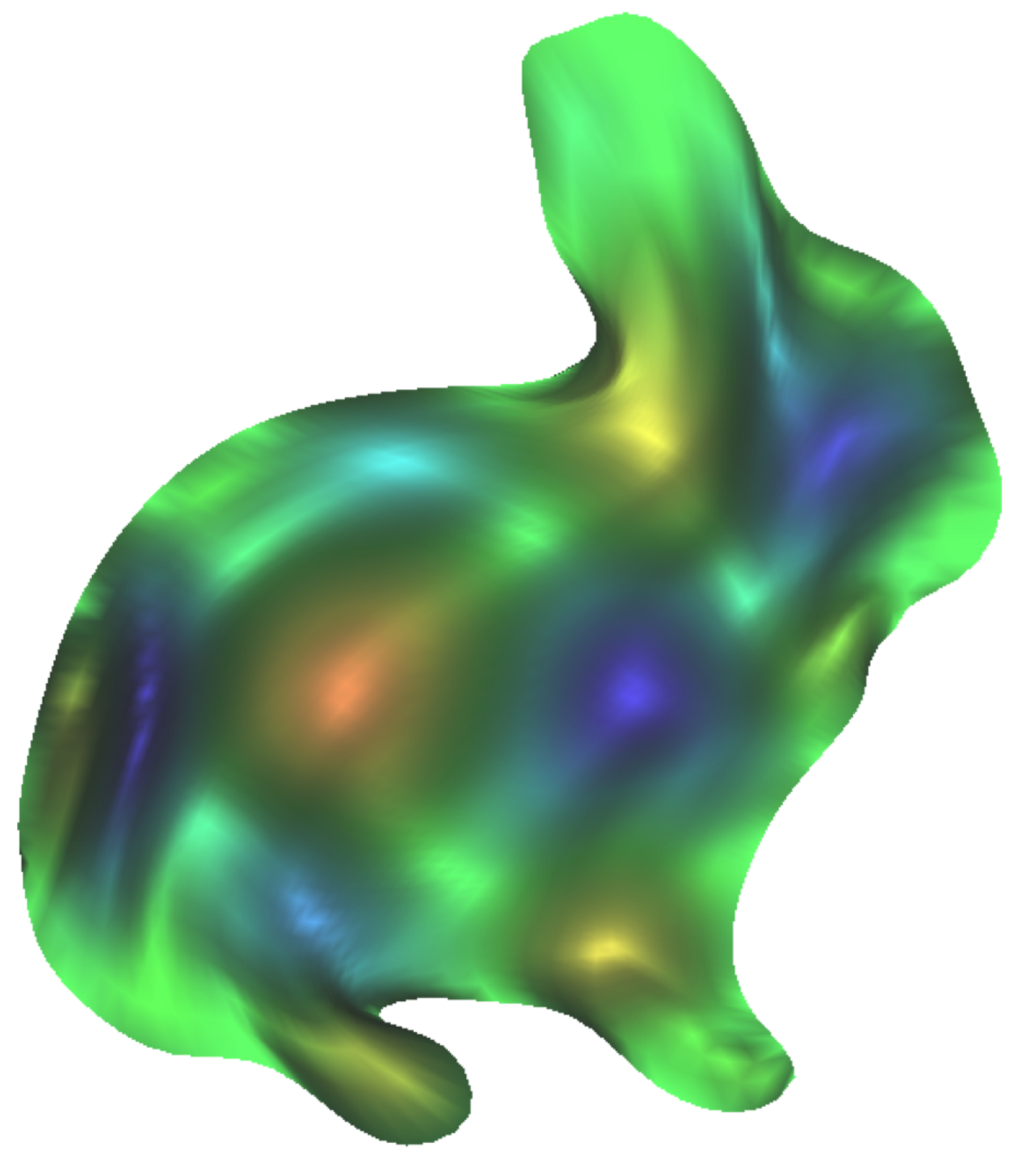}
\\ (e) direct mapping result from (b)
\end{minipage}
\begin{minipage}[t]{1.6in}
\centering
\includegraphics[width=1.6in]{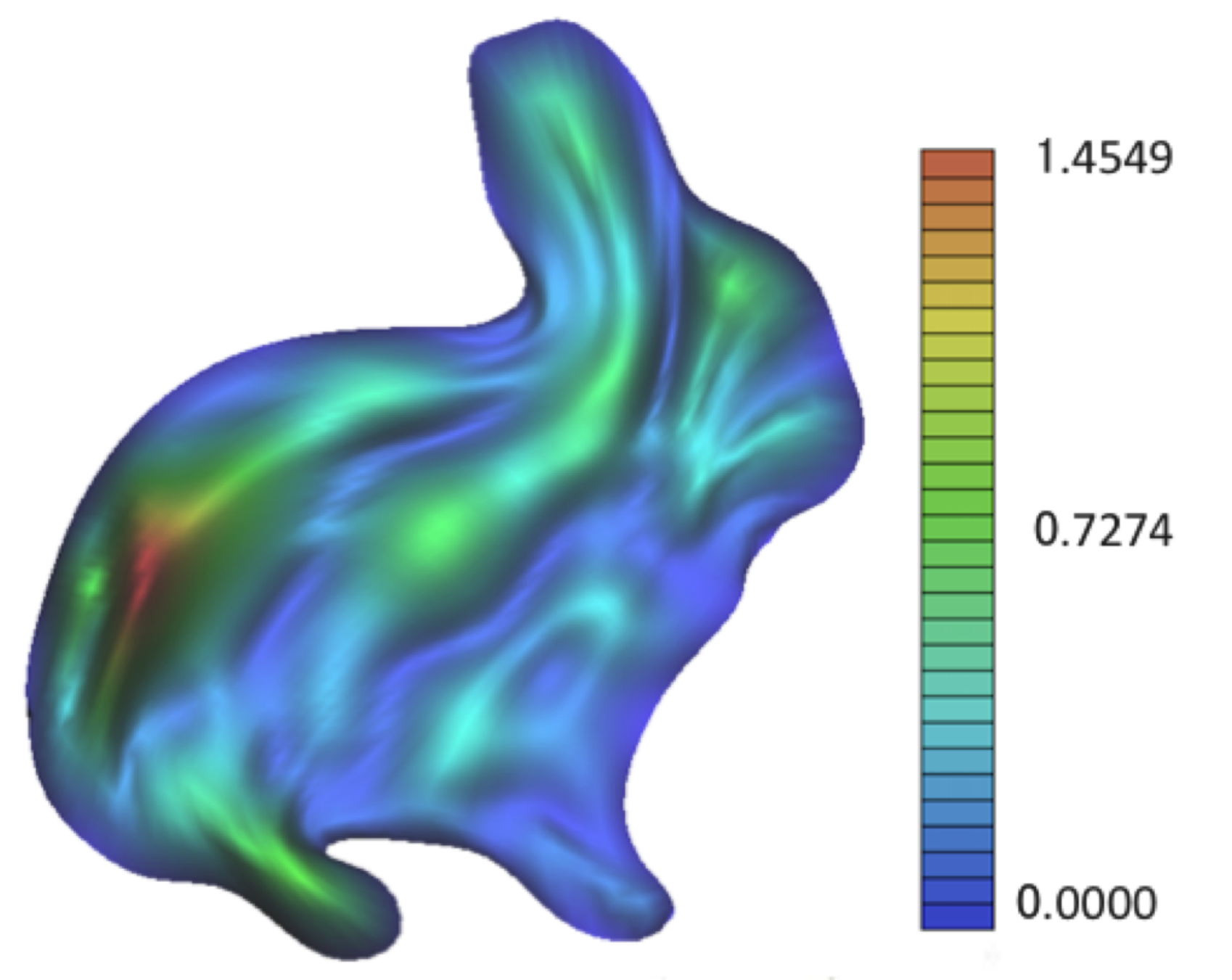}
\\ (f) the difference between (e)and (d) 
\end{minipage}
\begin{minipage}[t]{0.24\textwidth}
\centering
\includegraphics[width=1.05\textwidth]{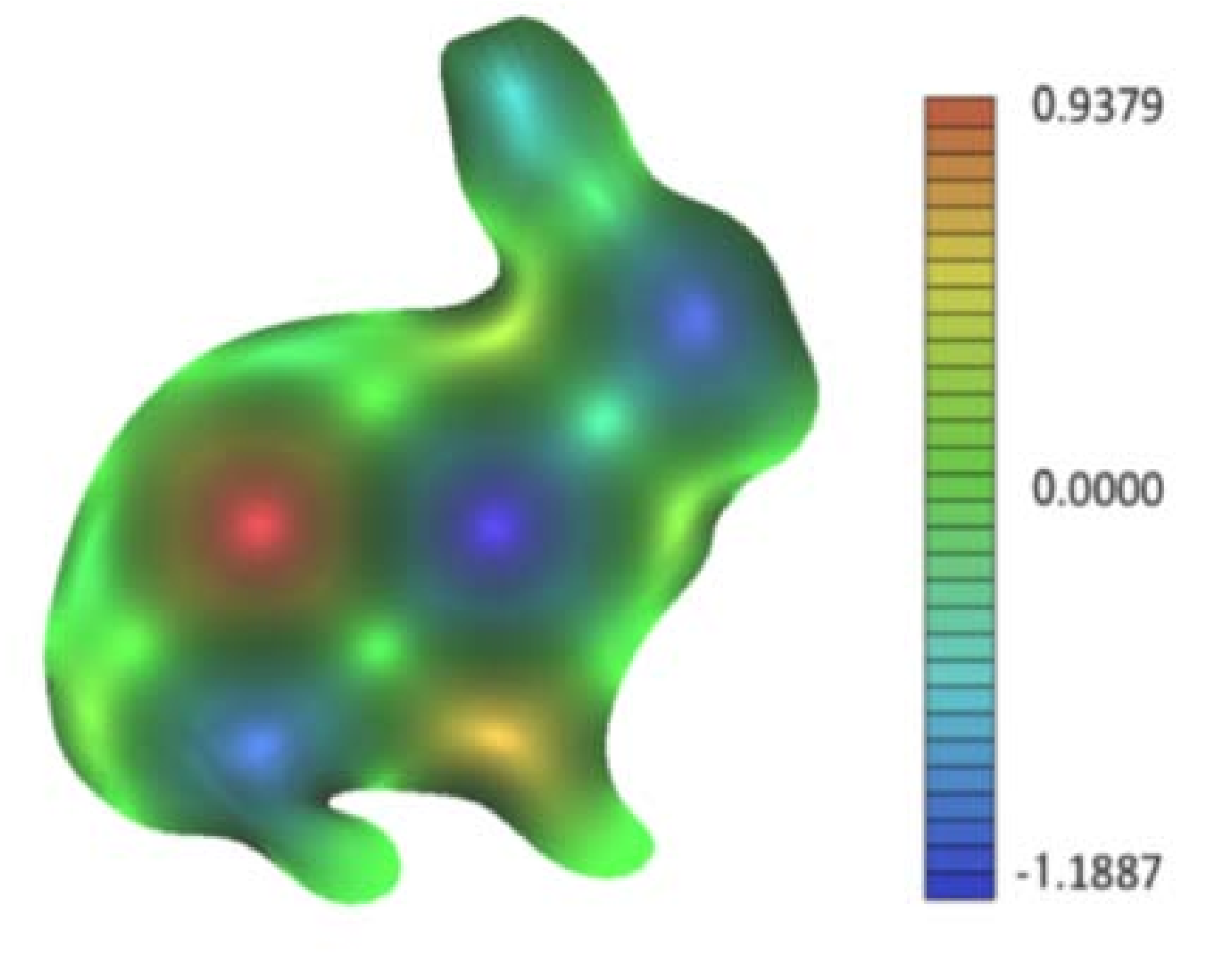}
\\ (g) solution w.r.t the proposed method   
\end{minipage}
\begin{minipage}[t]{1.5in}
\centering
\includegraphics[width=1.5in]{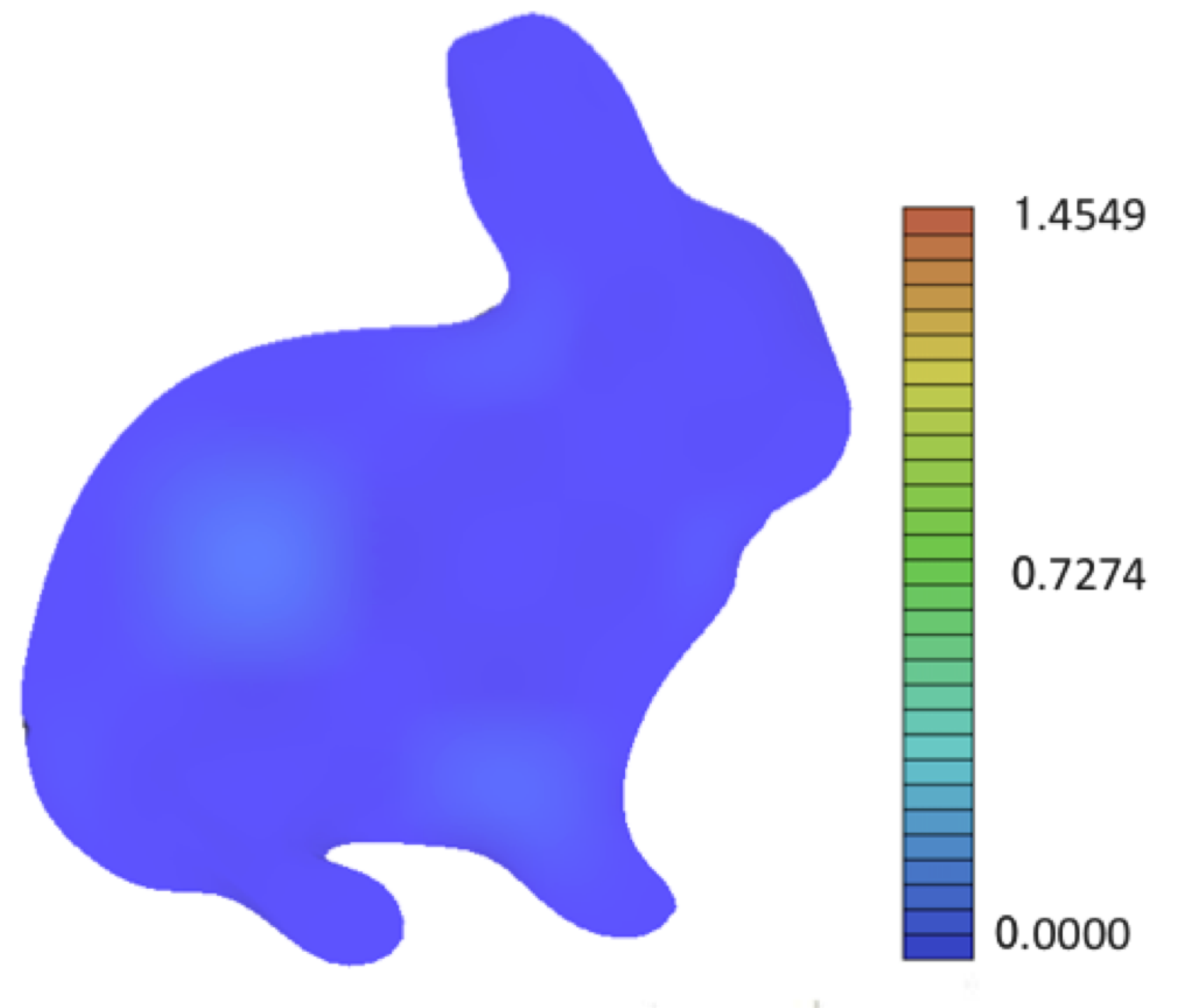}
\\ (h) the difference between (g) and (d) 
\end{minipage}\\
\caption{Comparison of results obtained from IGA computation reuse and
  direct mapping. (a) and (c) are two different planar domains that
  have the same tensor-product B-spline space. (b) and (d) are the IGA
  solution of a heat conduction problem solved on these two domains
  separately. (e) is the result by directly mapping (b) into (c), and
  (f) shows the difference between the solutions  in (e) and (d).  (g)
  is the result obtained by our proposed computation-reuse method, and
  (h) shows the difference  between the solutions in (g) and (d) with the same scale as in (f) -- it is much more accurate.
%Comparison of results from direct mapping and IGA solving. (a) (b) two planar computational domains with the same B-spline basis functions and the same number of control points; (c) IGA solution over computational domain in (a); (d) direct mapping results of IGA solution in (c) to computational domain in (b); (e) IGA solution over computational domain in (b); (f) error colormap of the solution in (d); (g) error colormap of the solution in (e) with the same scale as in (f).  
}
\label{fig:rabbit}
\end{figure}

It should be mentioned that the result of IGA-solution cannot be
directly reused by the mapping of a consistent parameterization
(e.g.,~\cite{KZW12,KZW:TVCG2012, sheffer:sig2004, Hoppe:sig2004}) between models having similar semantic features As illustrated in Fig.\ref{fig:rabbit}(a,b) and (c,d), a heat conduction problem with homogeneous boundary condition and the following source function
\[
f(x,y)= -2\pi^2\sin(\pi x)\sin(\pi y) 
\]
is solved separately over two different planar domains that have the
same tensor-product B-spline space. When directly mapping the result
from Fig.\ref{fig:rabbit}(b) into the domain of (c), the result of
heat distribution is as shown in (e) which is quite different from IGA
result (i.e., Fig.\ref{fig:rabbit}(d)). Fig.\ref{fig:rabbit}(f) shows
the difference in color between (e) and (d). On the other aspects,
when using the computation reuse approach developed in this paper, the 
corresponding solution is presented in  Fig.\ref{fig:rabbit}(g), where
approximation errors brought in are trivial -- see the difference in
color  shown in Fig.\ref{fig:rabbit}(h).

The main contribution of our work can be summarized as follows:
\begin{itemize}
\item An efficient quadrature-free method is proposed to compute the entries of stiffness matrix
  with the help of \Bezier extraction and polynomial approximation techniques applying to trivariate rational \Bezier functions.  
  
\item  We present a framework of computational reuse in IGA and the method for
  reuse when imposing boundary conditions in this framework. Up to
  $12.6$ times speedup can be observed by using our method on problems 
  with large number of degree of freedom (DOF). 
\end{itemize}

\vspace{5pt} \noindent The rest of our paper is organized as follows. In Section \ref{sec:related},  the related work on volumetric parameterization and computational efficiency of IGA will be reviewed.  The method to construct consistent B-spline based volumetric parameterization is presented in Section \ref{sec:volcon}. Section \ref{sec:analysis} presents the quadrature-free IGA
method using the \Bezier extraction and polynomial approximation
techniques. By combining techniques presented in Sections \ref{sec:volcon} 
and \ref{sec:analysis}, the computation reuse framework for a set of models with similar semantic features is presented in Section \ref{sec:example}.  
Lastly, we conclude this paper and discuss possible future works in Section \ref{sec:conclude}.

\section{Related work}
\label{sec:related}
\noindent \textbf{Volumetric parameterization} \hspace{8pt} From the viewpoint of graphics applications, volume parameterization
of 3D models has been studied in \cite{li:spm07,xia:smi10,xia:gmp10}. On the other aspect, there are also some recent work on volume parameterization in the literature of IGA. Martin et al. \cite{martin:CAGD2009}  proposed a method to fit a genus-$0$ triangular mesh by B-spline volume parameterization, based on discrete volumetric harmonic functions.
A variational approach for constructing  NURBS parameterization of
swept volumes is proposed by  Aigner et al. \cite{Aigner:swept}. Escobar et al. \cite{Escobara:CMAME2011} proposed a method to construct a
trivariate T-spline volume of complex genus-zero solids by using an adaptive tetrahedral meshing and
mesh untangling technique.  Zhang et al. \cite{Zhang:cmame2012} proposed a robust and efficient algorithm to
construct injective solid T-splines for genus-zero geometry
from a boundary triangulation. Based on the
Morse theory, a volumetric parameterization method of mesh model with
arbitrary topology is proposed in \cite{Wang:spm12}.   In \cite{xu:cmame,xu:spm2012}, a constraint optimization
framework is proposed to obtain analysis-suitable planar and volumetric
parameterization of computational domain. Pettersen and Skytt proposed the spline volume faring method
to obtain high-quality volume parameterization for isogeometric
applications \cite{Pettersen:2012splinefaring}. Zhang et al. \cite{Zhang:cm2013} studied
the construction of conformal solid T-spline from boundary T-spline
representation by Octree structure and boundary offset.  
 For volume parameterization of
three-manifold solid models having homeomorphic topology, 
Kwok and Wang \cite{Kwok:CAG2013} proposed an algorithm to
constructing volumetric domains with consistent topology. The generated volumetric parameterizations share
the same set of base domains and are constrained by the corresponding semantic features in the form of
anchor points. In this paper, a compactly supported radial basis
function method is proposed to construct consistent volumetric
B-spline parameterization for models with similar semantic features.

\vspace{8pt} \noindent \textbf{Efficiency issues of isogeometric analysis} \hspace{8pt} 
High-order basis functions are used to represent the geometry and the 
physical field in IGA to achieve high-accuracy simulation results. Hence, computational efficiency is a key
issue in the field of isogeometric analysis. In order to improve the efficiency, several methods
have been proposed. There is a trend to use \textit{graphic possessing units} (GPU) to improve the computational efficiency of assembling
the stiffness matrix (e.g., \cite{Karatarakis:cmame2014}). On the other hand, efficiency
improvement on integral computing has also been studied.  Hughes et al. \cite{hughes:quadrature} proposed 
an efficient quadrature rules for NURBS-based isogeometric analysis.  Antolin et al.~\cite{Antolin:cmame2015}  developed a sum-factorization approach to save the quadrature computational cost significantly
based on the tensor-product structure of splines and NURBS shape
functions.  Mantzaflaris and J\"{u}ttler \cite{Mantzaf:cmame2015} presented a quadrature-free integration method by interpolation and look-up table for Galerkin-based isogeometric analysis. In this paper, we propose the concept of
\textit{computation reuse} to improve the efficiency of IGA on a set of CAD models with consistent topology.

\begin{figure}
\centering
\includegraphics[width=\textwidth]{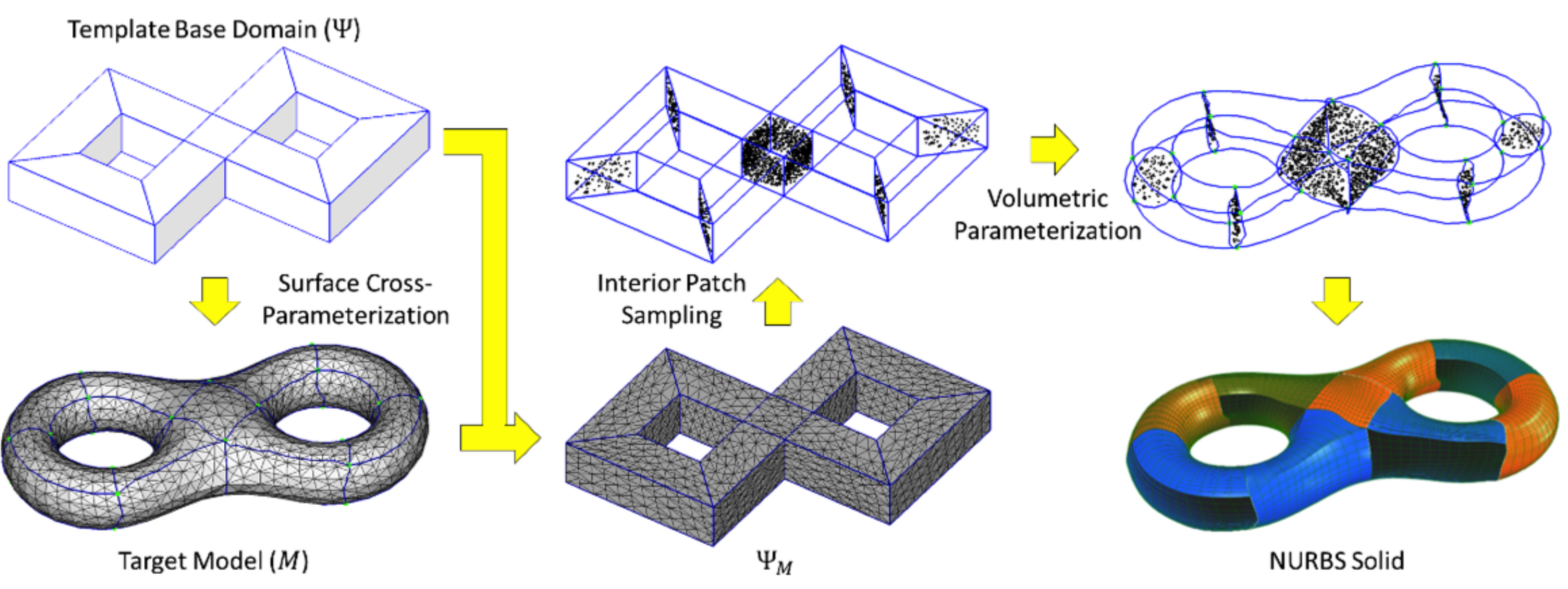}
\caption{Given the template base domain, $\Psi$, and a target surface
  model, $M$, the surface cross-parameterization \cite{KZW12} can be
  applied to partition $M$ consistently to the surface of $\Psi$. After obtaining
 a surface model $\Psi_M$ by mapping $M$ to  $\Psi$, the vertices on $\Psi_M$ and $M$ are used as handles to map the sampled points on the interior patches to $M$ by volumetric parameterization \cite{WHT07}. B-spline solid for $M$ can be obtained by fitting it to the boundary surface and the sampled points of $M$.} \label{fig:VolParaOverview}
\end{figure}

\section{Consistent B-spline volumetric parameterization of complex 3D
models}\label{sec:volcon}
To prepare for the computation reuse in IGA, we need to partition a model into a set of base
domains consistent to the pre-defined one, where each base domain will
be represented by a trivariate spline. We assume the reuse of IGA is applied to the models with similar
semantic features, e.g., a whole sequence of products having different
shapes but the same topology. The analysis will respect the semantic features, which are specified as anchor points, such that the boundary conditions can also be reused.

Given a connectivity of  volumetric base domains ($\Psi$), e.g., the
template base domain in Fig.\ref{fig:VolParaOverview}, we can
partition a target surface model ($M$) into a set of volumetric
sub-domains consistent to $\Psi$. The boundary surface is
first partitioned consistently according to anchor points \cite{KZW12,KZW:TVCG2012}, and the boundary surface is used
as the constraints to construct volumetric parameterization using
CSRBF-based elastic functions. The volumetric parameterization is used
to wrap the sample points on the interior surface of $\Psi$ to
$M$. The trivariate spline solids are constructed in each
domain  by fitting the boundary surface of $M$ and the sample points
on above determined interior surfaces.

\subsection{Consistent boundary surface decomposition}

Based on the anchor points specified on $M$, the surface of $M$ is partitioned consistently according to the nodes in $\Psi$ using the method in \cite{KZW12,KZW:TVCG2012}.
The basic idea is to trace shortest paths between anchor points on $M$
for each edge on the boundary surface of $\Psi$. To ensure topological
consistency, we need to ensure the paths are free of intersections,
blocking and wrong cyclical order when tracing the paths \cite{PSS01}.
Surface cross-parameterization \cite{KZW12,KZW:TVCG2012} can then be computed
through this partition, and a  surface model $\Psi_M$ is obtained by mapping the boundary surface of $M$ to $\Psi$.
Once the boundary surface is partitioned, the next step is to construct the interior patches.

\subsection{CSRBF-based volumetric parameterization}

Due to the reason that $M$ is a surface model without volumetric
information, we need to construct the interior patches for $M$.  A sampling is first taken on the interior patches in $\Psi$. By the
volumetric parameterization presented in \cite{WHT07}, using the
vertices on $\Psi_M$ and $M$ as handles, sample points on the
interior patches of $\Psi$ can be mapped to the interior of $M$.
Specifically, the volumetric parameterization can be expressed by the elastic function:
\begin{equation} \label{eq:CSRBF}
f(\mathbf{x}) = \sum_{j=1}^{n} d_j \phi(\mathbf{x} - \mathbf{v_j}) + P(\mathbf{x}),
\end{equation}
where $\mathbf{v_j}$s are the locations of the constraint vertices on $\Psi_M$ , $d_j$s are the weights, and $P(\mathbf{x})$ is a linear polynomial that accounts for the linear and constant portions of $f$. 

To solve for the set of $d_j$ that will satisfy the constraints,
$\mathbf{v_i'} = f(\mathbf{v_i})$, on the elastic function, we can substitute the right side of Eq.(\ref{eq:CSRBF}) for $f(\mathbf{v_i})$ and yield to
\begin{equation}
\mathbf{v_i'} = \sum_{j=1}^{k} d_j \phi(\mathbf{v_i} - \mathbf{v_j}) + P(\mathbf{x}).
\end{equation}
Since this equation is linear with respect to the unknowns
$d_j$s and the coefficients of $P(\mathbf{x})$, the unknowns can be formulated and solved as a linear system.

We take the Wendland's compactly supported radial basis function as $\phi(r)$ , it is given by
\begin{equation} \label{eq:CSRBF1}
\phi(r) = (1-r)_+^6(3 + 18r + 35r^2)\mbox{, with $r=\frac{\|\mathbf{\bar{x}}\|_2}{\lambda}$, $a_+ = max\{a,0\}$},
\end{equation}
which has a compact support with the radius $\lambda$, and has
$C^4$-continuity. As a compactly supported kernel function is used, 
the linear equation system will become sparse and can be efficiently solved by Cholesky decomposition or LU decomposition.

Now the volumetric parameterization has been established. The sample points on the interior patches in $\Psi$ are mapped to $M$ by $f(\cdot)$.

\subsection{B-spline solid construction}
 
The trivariate spline solid $S(u,v,w)$ (i.e., B-spline in our implementation) for each base domain
inside $M$ can be constructed by fitting the boundary surface of
$S(u,v,w)$ to its corresponding boundary surface on $M$ and the interior 
sample points obtained by the nonlinear elastic function $f(\cdot)$. By this way, we can convert the target model $M$ into a set of connected 
trivariate spline solids with consistent topology as $\Psi$. When
applying this to a sequence of models $\{M_i\}$, all models will have
spline solids in the same connectivity but different control points
(i.e., different shapes). With the help of this setup, we will show how to reuse the computation of IGA
taken on one model in the IGA of other models.

\section{Quadrature-free isogeometric analysis with \Bezier extraction and
 polynomial approximation}\label{sec:analysis}

In this section, an efficient quadrature-free method is proposed to
compute the entries of stiffness matrix with the help of \Bezier
extraction and polynomial approximation techniques of trivariate
rational \Bezier functions.  Here we use heat conduction problem as an
example to demonstrate the functionality of our approach. The
quadrature-free method can be applied to many other problems of
computational physics such as the linear elasticity problem in solid 
mechanics. 

\subsection{Preliminary on Bernstein polynomials}
Some preliminary on Bernstein polynomials will be used in our method. They are reviewed below. 

\begin{lemma} Product of Bernstein polynomials
\begin{equation}\label{eq:product1}
B_{i}^{m}(t) B_{j}^{n}(t) = \frac{{m \choose i}{n \choose j}}{{m+n \choose i+j}} B_{{i+j}}^{m+n}(t)
\end{equation}
\end{lemma}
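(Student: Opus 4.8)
The plan is to prove the identity by direct substitution of the definition of the Bernstein basis and then collecting the monomial factors on each side. Recall that the degree-$m$ Bernstein polynomial is $B_i^m(t) = \binom{m}{i} t^i (1-t)^{m-i}$, so the identity should reduce to an elementary comparison of powers of $t$ and $(1-t)$.

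First I would expand the left-hand side explicitly as
\[
B_i^m(t)\, B_j^n(t) = \binom{m}{i}\binom{n}{j}\, t^i(1-t)^{m-i}\, t^j(1-t)^{n-j}.
\]
Next I would combine the powers, using that exponents add under multiplication: the total power of $t$ becomes $i+j$, while the total power of $(1-t)$ becomes $(m-i)+(n-j)=(m+n)-(i+j)$. This leaves the monomial factor $t^{i+j}(1-t)^{(m+n)-(i+j)}$, which is precisely the monomial part of the degree-$(m+n)$ Bernstein polynomial with index $i+j$.

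Finally I would reintroduce the correct normalization. Since $B_{i+j}^{m+n}(t) = \binom{m+n}{i+j}\, t^{i+j}(1-t)^{(m+n)-(i+j)}$, I can substitute $t^{i+j}(1-t)^{(m+n)-(i+j)} = B_{i+j}^{m+n}(t)\big/\binom{m+n}{i+j}$ into the expression above, which immediately produces the stated coefficient $\binom{m}{i}\binom{n}{j}\big/\binom{m+n}{i+j}$ in front of $B_{i+j}^{m+n}(t)$.

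The only thing to watch is the bookkeeping of the two exponents and the three binomial factors; there is no genuine obstacle here, since the result follows in one line from the product of two monomials together with the definition of the Bernstein basis. In particular, I do not expect to need any auxiliary lemma, generating-function argument, or induction on the degrees $m$ and $n$.
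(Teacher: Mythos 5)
Your proof is correct: expanding both factors via $B_i^m(t)=\binom{m}{i}t^i(1-t)^{m-i}$, combining exponents, and renormalizing by $\binom{m+n}{i+j}$ is exactly the standard one-line argument, and the bookkeeping you describe goes through without issue. The paper itself states this lemma as a reviewed preliminary and gives no proof, so there is no alternative route to compare against; your direct computation is the natural justification.
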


\begin{lemma} Integration of Bernstein polynomials 
\begin{equation}\label{eq:integration}
\int_{0}^{1}B_{i}^{m}(t) dt= \frac{1}{m+1} 
\end{equation}
\end{lemma}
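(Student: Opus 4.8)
The plan is to unwind the definition of the Bernstein basis and recognize the resulting integral as a Beta integral. Recall that $B_{i}^{m}(t) = \binom{m}{i}\, t^{i}(1-t)^{m-i}$, so the quantity to evaluate is $\binom{m}{i}\int_{0}^{1} t^{i}(1-t)^{m-i}\,dt$. The binomial factor is constant and pulls straight out of the integral, leaving a standard Eulerian integral of the first kind, whose value I expect to cancel against $\binom{m}{i}$ to leave something independent of $i$.

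First I would invoke the Beta-function identity $\int_{0}^{1} t^{a}(1-t)^{b}\,dt = \frac{a!\,b!}{(a+b+1)!}$, valid for non-negative integers $a,b$. Taking $a=i$ and $b=m-i$ gives $\int_{0}^{1} t^{i}(1-t)^{m-i}\,dt = \frac{i!\,(m-i)!}{(m+1)!}$. Multiplying by $\binom{m}{i} = \frac{m!}{i!\,(m-i)!}$ then produces $\frac{m!}{i!\,(m-i)!}\cdot\frac{i!\,(m-i)!}{(m+1)!} = \frac{m!}{(m+1)!} = \frac{1}{m+1}$, as claimed. The decisive point is that the factors $i!\,(m-i)!$ cancel completely, so the result does not depend on the index $i$.

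As an independent check, and as an alternative route that avoids quoting the Beta integral, I would use the telescoping antiderivative relation $\frac{d}{dt}\sum_{j=i+1}^{m+1} B_{j}^{m+1}(t) = (m+1)\,B_{i}^{m}(t)$, which follows directly from the standard derivative formula for the Bernstein basis together with the convention $B_{-1}^{m}=B_{m+1}^{m}=0$. Integrating over $[0,1]$ reduces the left side to the boundary term $\left[\sum_{j=i+1}^{m+1} B_{j}^{m+1}(t)\right]_{0}^{1}$, and the endpoint values $B_{j}^{m+1}(0)=\delta_{j0}$, $B_{j}^{m+1}(1)=\delta_{j,m+1}$ collapse this to $1-0=1$, again yielding $\frac{1}{m+1}$. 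A further sanity check is that summing the claimed value over $i=0,\dots,m$ returns $1$, consistent with integrating the partition-of-unity identity $\sum_{i=0}^{m} B_{i}^{m}\equiv 1$ termwise. There is no genuine obstacle in this proof; the only thing demanding care is the factorial bookkeeping in the cancellation, and I would verify the edge cases $i=0$ and $i=m$ explicitly to be safe.
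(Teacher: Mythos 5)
Your proof is correct. The paper states this lemma only as a standard preliminary on Bernstein polynomials and offers no proof of its own, so there is nothing to compare against; your Beta-integral computation $\binom{m}{i}\int_{0}^{1}t^{i}(1-t)^{m-i}\,dt=\binom{m}{i}\frac{i!\,(m-i)!}{(m+1)!}=\frac{1}{m+1}$ is the standard argument, and your telescoping-antiderivative check and the partition-of-unity consistency check are both sound.
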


\begin{lemma} Degree elevation of Bernstein polynomials 
\begin{equation}
B_{i}^{n}(t) = t B_{i−1}^{n-1}(t) + (1-t) B_i^{n-1}(t)
\end{equation}
\end{lemma}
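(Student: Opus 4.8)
The plan is to prove this two-term recurrence directly from the closed-form definition of the Bernstein polynomials, $B_i^n(t) = \binom{n}{i} t^i (1-t)^{n-i}$, rather than through any combinatorial or generating-function machinery. Since both sides are polynomials in $t$, it suffices to verify that they coincide as formal expressions in $t$.

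First I would substitute the definition into each summand on the right-hand side and simplify the monomials. The term $t\,B_{i-1}^{n-1}(t)$ becomes $t \cdot \binom{n-1}{i-1} t^{i-1}(1-t)^{(n-1)-(i-1)} = \binom{n-1}{i-1}\, t^{i}(1-t)^{n-i}$, while the term $(1-t)\,B_i^{n-1}(t)$ becomes $(1-t)\cdot\binom{n-1}{i}\, t^i (1-t)^{(n-1)-i} = \binom{n-1}{i}\, t^i (1-t)^{n-i}$. The key observation is that both summands now carry the \emph{identical} monomial factor $t^i(1-t)^{n-i}$, a direct consequence of the fact that multiplying $B_{i-1}^{n-1}$ by $t$ and $B_i^{n-1}$ by $(1-t)$ raises both the degree and the relevant exponents by one.

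Next I would factor out this common monomial, collapsing the right-hand side to $\bigl[\binom{n-1}{i-1} + \binom{n-1}{i}\bigr]\, t^i (1-t)^{n-i}$, and then invoke Pascal's identity $\binom{n-1}{i-1} + \binom{n-1}{i} = \binom{n}{i}$ to turn the bracketed sum into $\binom{n}{i}$. This immediately reproduces $B_i^n(t)$, closing the argument.

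The only place demanding genuine care — and the closest thing to an obstacle in an otherwise elementary derivation — is the handling of the boundary indices $i=0$ and $i=n$. When $i=0$ the first right-hand term involves $B_{-1}^{n-1}$, and when $i=n$ the second involves $B_n^{n-1}$; both must be read as identically zero, consistent with the convention $\binom{n-1}{-1}=\binom{n-1}{n}=0$. With that convention in force, Pascal's identity continues to hold at the endpoints, so the computation above applies uniformly and no separate case analysis is required.
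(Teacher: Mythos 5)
Your argument is correct: substituting the closed form $B_i^n(t)=\binom{n}{i}t^i(1-t)^{n-i}$ into both terms, factoring the common monomial $t^i(1-t)^{n-i}$, and applying Pascal's identity $\binom{n-1}{i-1}+\binom{n-1}{i}=\binom{n}{i}$ is the standard and complete derivation, and your treatment of the endpoint indices via the convention $\binom{n-1}{-1}=\binom{n-1}{n}=0$ is exactly the right care to take. The paper itself states this lemma as a reviewed preliminary and gives no proof, so there is nothing to contrast your route against; your derivation fills that gap correctly.
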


\begin{proposition}  \label{prop:prop}
Let $ R(u,v,w)$ and $S(u,v,w)$ be parametric function defined by 
\[
R(u,v,w)=\sum_{i=0}^{l_1}\sum_{j=0}^{m_1}\sum_{k=0}^{n_1} a_{ijk} B_i^{l_1}(u) B_j^{m_1}(v) B_k^{n_1}(w),
\]
and 
\[
S(u,v,w)=\sum_{i=0}^{l_2}\sum_{j=0}^{m_2}\sum_{k=0}^{n_2} b_{ijk} B_i^{l_2}(u) B_j^{m_2}(v) B_k^{n_2}(w),
\]
where $a_{ijk}$ and $b_{ijk}$ are scale values. Then the product of  $
R(u,v,w)$ and $S(u,v,w)$ can be defined as 
\begin{equation}\label{eq:product}
R(u,v,w) \times S(u,v,w)=\sum_{i=0}^{l_1+l_2}\sum_{j=0}^{m_1+m_2}\sum_{k=0}^{n_1+n_2}
c_{ijk} B_i^{l_1+l_2}(u) B_j^{m_1+m_2}(v) B_k^{n_1+n_2}(w),
\end{equation}
where 
\[
c_{ijk}=\sum_{r=\text{max}(0,i-l_1)}^{\text{min}(i,l_2)}\sum_{s=\text{max}(j-m_1)}^{\text{min}(j,m_2)}\sum_{t=\text{max}(0,k-n_1)}^{\text{min}(k,n_2)}\frac{{l_1 \choose
    r}{l_2 \choose i-r}{m_1 \choose s}{m_2 \choose j-s}{n_1 \choose t}{n_2
    \choose k-t}a_{rst}b_{{(i-r)},{(j-s)},{(k-t)}}}{{l_1+l_2 \choose i}{m_1+m_2 \choose j}{n_1+n_2 \choose k}}
\]
\end{proposition}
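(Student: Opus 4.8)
The plan is to reduce the trivariate identity to three independent univariate products and then apply the Bernstein product formula \eqref{eq:product1} in each variable. First I would form $R\cdot S$ by multiplying the two triple sums, which yields a single sextuple sum over an index triple $(r,s,t)$ coming from $R$ and a triple $(r',s',t')$ coming from $S$. Because every Bernstein factor depends on only one of the parameters $u,v,w$, I would regroup the six basis functions in each summand into three separate univariate products, namely $B_r^{l_1}(u)B_{r'}^{l_2}(u)$, $B_s^{m_1}(v)B_{s'}^{m_2}(v)$, and $B_t^{n_1}(w)B_{t'}^{n_2}(w)$. This separation is exactly the tensor-product structure of the basis and requires no computation beyond reordering factors.

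Next I would apply \eqref{eq:product1} to each of the three univariate products. For the $u$-factor this gives
\[
B_r^{l_1}(u)\,B_{r'}^{l_2}(u)=\frac{\binom{l_1}{r}\binom{l_2}{r'}}{\binom{l_1+l_2}{r+r'}}\,B_{r+r'}^{l_1+l_2}(u),
\]
and analogously in $v$ and $w$. Substituting these three identities back into the sextuple sum produces a summand equal to $a_{rst}\,b_{r's't'}$ times the product of the three binomial-coefficient ratios times the single elevated tensor basis function $B_{r+r'}^{l_1+l_2}(u)B_{s+s'}^{m_1+m_2}(v)B_{t+t'}^{n_1+n_2}(w)$.

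The final step is a change of summation order. I would introduce the total-degree indices $i=r+r'$, $j=s+s'$, $k=t+t'$, fix a target triple $(i,j,k)$, and collect the coefficient of $B_i^{l_1+l_2}(u)B_j^{m_1+m_2}(v)B_k^{n_1+n_2}(w)$ by summing over all admissible splittings of $i,j,k$. Eliminating $r'=i-r$, $s'=j-s$, $t'=k-t$, and pulling the common denominator $\binom{l_1+l_2}{i}\binom{m_1+m_2}{j}\binom{n_1+n_2}{k}$ out of the three ratios, reproduces exactly the stated expression for $c_{ijk}$, which completes the identification.

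Since the three re-indexings are independent and the analytic content is entirely carried by the single univariate rule \eqref{eq:product1}, I expect no genuine difficulty; the only place a slip is likely is the bookkeeping of the summation limits. The constraints $0\le r\le l_1$ together with $0\le i-r\le l_2$ force $\max(0,i-l_2)\le r\le\min(i,l_1)$, and likewise for $s$ and $t$. Getting these ranges right is the one step I would check carefully, and in particular I would verify that they agree with the limits displayed in the statement of the proposition.
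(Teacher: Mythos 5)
Your proof is correct and takes essentially the same route the paper intends: the paper's entire justification is the single remark that the proposition ``can be proved directly by Eq.(\ref{eq:product1})'', and your argument---regrouping the tensor-product basis into three univariate products, applying the univariate rule in each of $u$, $v$, $w$, and reindexing by $i=r+r'$, $j=s+s'$, $k=t+t'$---is exactly that computation written out. One point worth recording: the summation limits you derive, $\max(0,i-l_2)\le r\le\min(i,l_1)$ (forced by $0\le r\le l_1$ and $0\le i-r\le l_2$), are the correct ones, whereas the limits printed in the proposition, $\max(0,i-l_1)$ to $\min(i,l_2)$, have $l_1$ and $l_2$ interchanged --- a typo that is harmless only when $l_1=l_2$ (and similarly for the $s$ and $t$ ranges).
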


This proposition can be proved directly by Eq.(\ref{eq:product1}). 

\subsection{ \Bezier extraction of B-spline volume }

In order to achieve an efficient computation, the isogeometric analysis
problem is solved with \Bezier extraction \cite{Borden}, in which  
piece-wise B-spline representation is first converted into a \Bezier form. 

Without loss of generality, B-spline basis defined on a knot vector
can be written as a linear combination of the
Bernstein polynomials, that is
\begin{equation}\label{eqn:extraction1}
\mathbf {N}(\mathbf{\xi}) = \mathbf{C}(\mathbf{\xi}) \mathbf{B}(\mathbf{\xi})
\end{equation}
where $\mathbf{C}(\mathbf{\xi}) $ denotes the \Bezier extraction operator and $ B(\bf
\xi)$ are the Bernstein polynomials which are defined on $[0, 1]$. 
The conversion matrix $\mathbf{C}(\mathbf{\xi}) $ is sparse and its entries can be
obtained by knot insertions and recursive computation. 
Details on \Bezier extraction can be found in Borden et al. \cite{Borden} and
Scott et al. \cite{Scott}.

With the conversion matrix $\mathbf{C}(\mathbf{\xi})
$, $\mathbf{C}(\mathbf{\eta}) $ and $\mathbf{C}(\mathbf{\zeta}) $ , the \Bezier extraction of B-spline
volume can be represented as follows
\begin{equation}
\mathbf{P} = (\mathbf{C}(\xi) \otimes\mathbf{C}(\eta)\otimes \mathbf{C}(\zeta)) \mathbf{Q}
\end{equation}
where $\mathbf{Q}$ are the control points of the B-spline volume, 
$\mathbf{P}$ are the control points of the extracted \Bezier volume,
$\mathbf{C}(\xi)$, $\mathbf{C}(\eta)$ and $\mathbf{C}(\zeta)$ are
derived from (\ref{eqn:extraction1}) . 
An example with cubic B-spline volume is shown in Fig.\ref{fig:extraction}  to illustrate the 
extraction results and the corresponding control lattice of the
extracted four \Bezier volumes with different colors.  

\begin{figure*}
\centering
\begin{minipage}[t]{3.1in}
\centering
\includegraphics[width=2.7in]{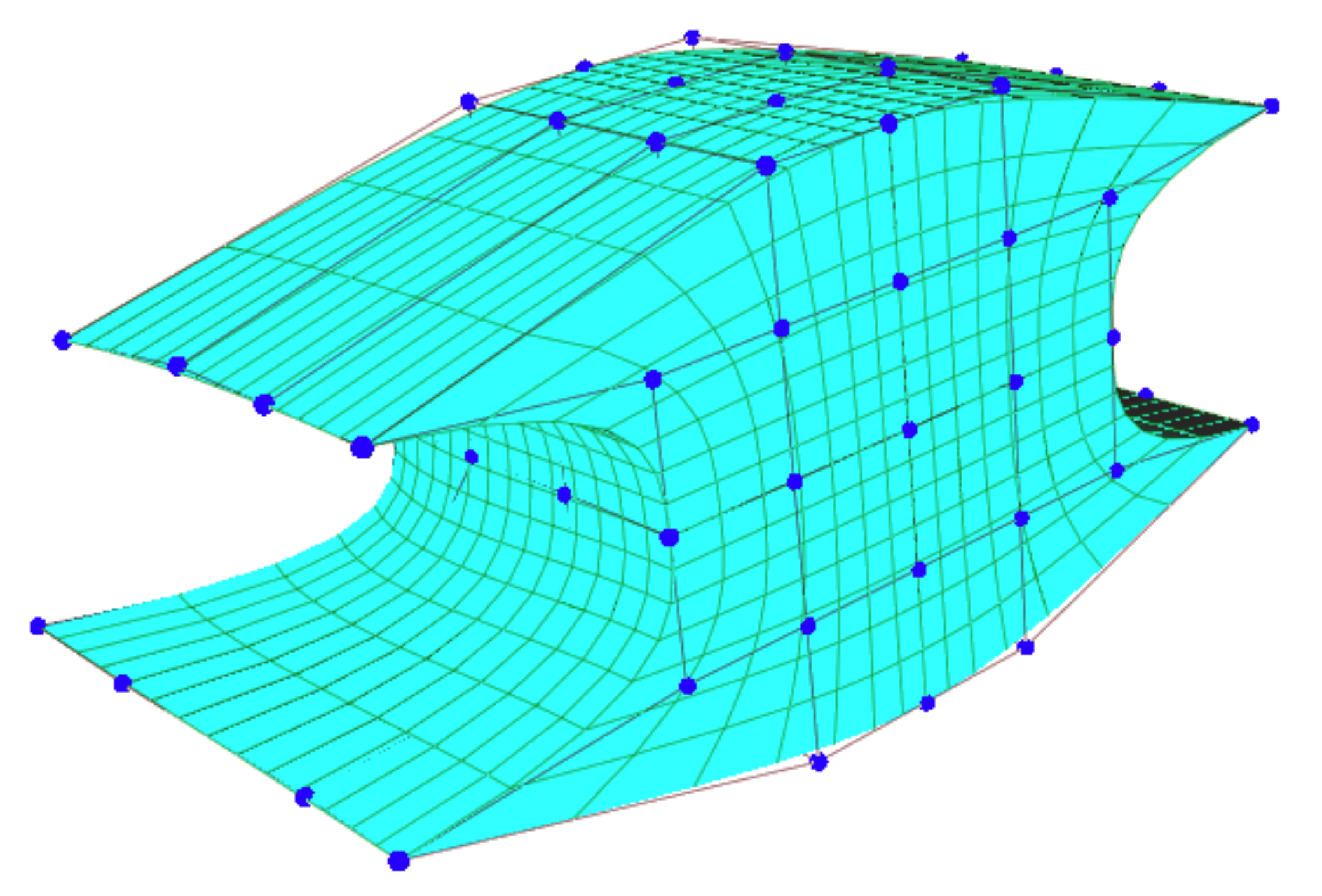}
\\ (a) Cubic B-spline volume
\end{minipage}
\begin{minipage}[t]{3.1in}
\centering
\includegraphics[width=2.7in ]{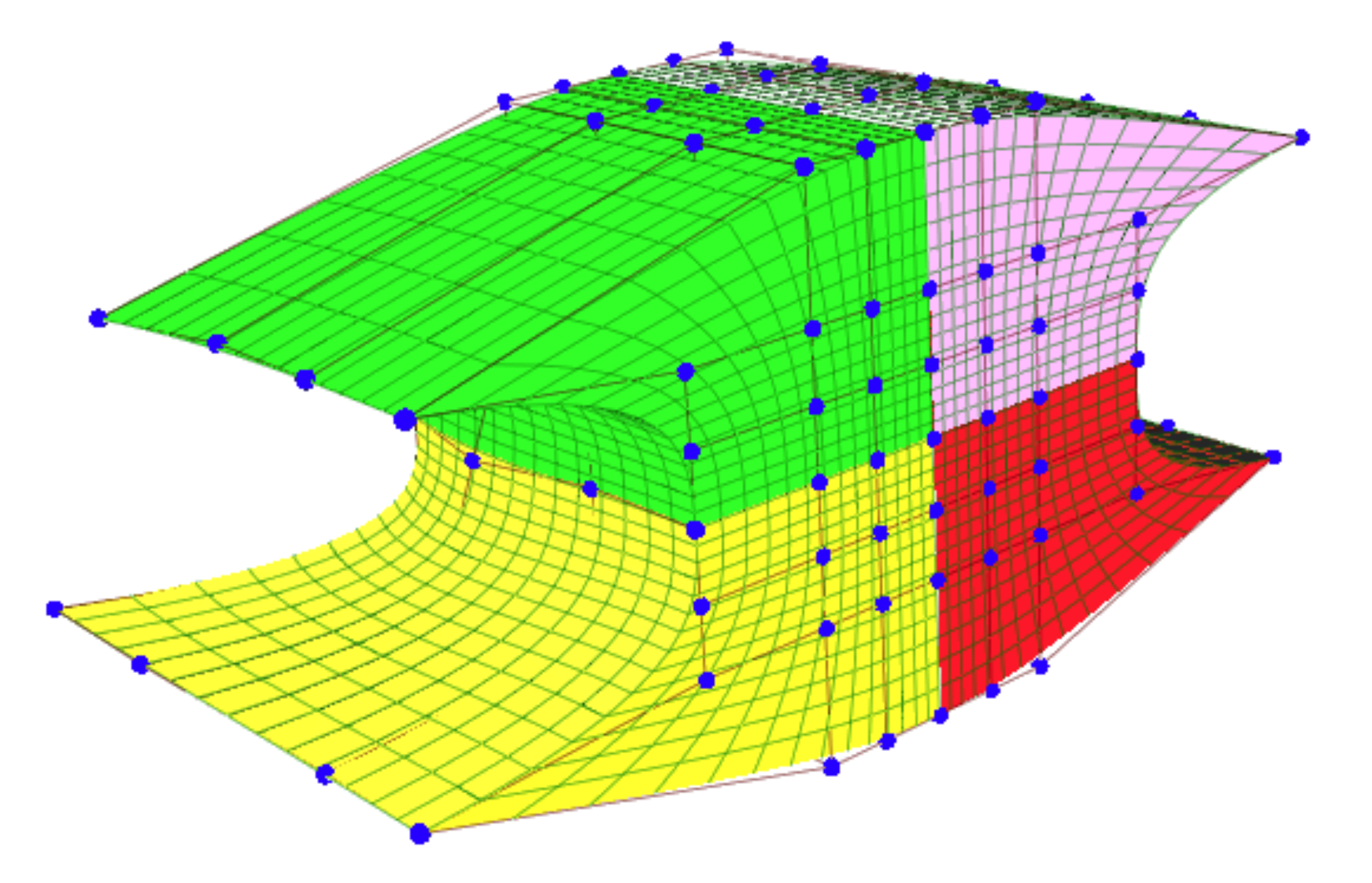}
\\ (b) Extracted \Bezier volume of B-spline volume in (a).
\end{minipage} 
\caption{Example of \Bezier extraction.}
\label{fig:extraction}
\end{figure*}

\subsection{Isogeometric analysis of heat conduction problem }

Given a domain $\Omega$ with boundary $\Gamma =\partial \Omega_D  $
and a volumetric parameterization as follows
\[
\Sc(\xi,\eta, \zeta)=(x(\xi,\eta, \zeta),
y(\xi,\eta,\zeta),  z(\xi,\eta,\zeta))=\sum_{i=0}^n \sum_{j=0}^m
\sum_{k=0}^l B_i^p(\xi) B_j^q(\eta) B_k^r(\zeta)
\pb_{i,j,k}. 
\]
We consider the following thermal conduction problem:
\begin{equation} 
\begin{split}
& {\boldsymbol \nabla} ( {\boldsymbol \nabla} T ({\bf x}) )= g({\bf x})  \quad \text{ in } \Omega \\
& T({\bf x}) = T_0 ({\bf x}) \text{ on } \partial \Omega_D 
\end{split}
\label{eq:heat}
\end{equation}
where ${\bf x}$ are the Cartesian coordinates, $T$ represents
the temperature field,  Dirichlet condition with imposed temperature $T_0$ is
applied on $\partial \Omega_D$, and $g$ is a user-defined function as a
source term to the classical heat conduction equation.

According to a classical variational approach, we seek for a
solution $T$ in the Sobolev space $ H^1(\Omega)$, such as $ T({\bf x}) = T_0 ({\bf x})$ on $\partial \Omega_D$ and
\begin{equation*}
\int_\Omega {\boldsymbol \nabla} (
    {\boldsymbol \nabla} T ({\bf x}) ) \; \psi({\bf x}) \; d \Omega = \int_\Omega g({\bf x})  \; \psi({\bf x}) \; d \Omega
    \qquad \forall \psi \in H^1_{\partial \Omega_D}(\Omega),
\end{equation*}
where $\psi({\bf x})$ are trial functions. After integrating by parts
and applying the boundary conditions, we can obtain
\begin{equation}
- \int_\Omega  
    {\boldsymbol \nabla} T ({\bf x}) \; {\boldsymbol \nabla}
    \psi({\bf x}) \; d \Omega  =  \int_\Omega f({\bf x})  \; \psi({\bf x}) \; d \Omega.
    \label{eq:weak_formulation}
\end{equation}
Following the paradigm of IGA, the temperature field is represented by
using trivariate spline  basis functions as 
\begin{equation*}
T(\xi,\eta, \zeta) = \sum_{i=1}^{n_i} \, \sum_{j=1}^{n_j} \,
\sum_{k=1}^{n_k} \hat{B}_i^{p_i} (\xi) \, \hat{B}_j^{p_j} (\eta) \,
\hat{B}_k^{p_k} (\zeta) \, T_{ijk} ,
\label{eq:T-iso}
\end{equation*}
where $\hat{B}_i$ functions are Bernstein basis functions and
$\ub=(\xi,\eta, \zeta)\in\Pc$ are domain parameters. Then, we define
the trial  functions $\psi({\bf x})$ in the physical domain as
\begin{equation*}
B_{ijk} ({\bf x}) = \hat{B}_{ijk} \circ \sigma^{-1} (x,y,z) = \hat{B}_{ijk}
(\xi,\eta, \zeta) = \hat{B}_i^{p_i} (\xi) \, \hat{B}_j^{p_j} (\eta) \,\hat{B}_k^{p_k} (\zeta) .
\end{equation*}
The weak form in Eq.~(\ref{eq:weak_formulation}) can then be written as
\begin{equation*}
\label{weak_iso}
\begin{split}
\sum_{r=1}^{n_r} \sum_{s=1}^{n_s} \sum_{t=1}^{t_l} T_{rst} \int_{\Omega}  
    {\boldsymbol \nabla} B_{rst} ({\bf x})  \; {\boldsymbol \nabla}
   B_{ijk} ({\bf x})  \; d \Omega  =  -
       \int_\Omega g({\bf x})  \; B_{ijk} ({\bf x}) \; d \Omega.
\end{split}
\end{equation*}
Finally, we obtain a linear system with the coefficient matrix similar
to the stiffness matrix obtained from the
classical finite-element methods, such as 
\begin{equation}
\sum_{r=1}^{n_r} \sum_{s=1}^{n_s} \sum_{t=1}^{t_l} T_{rst} M_{ijk,rst}=S_{ijk}
\end{equation}
with 
\begin{equation}
\begin{split}
 M_{ijk,rst} & =  \int_\Omega 
    {\boldsymbol \nabla} B_{rst} ({\bf x})  \; {\boldsymbol \nabla}
   B_{ijk} ({\bf x})  \; d \Omega \\
   &=  \int_{\Pc}  
    {\boldsymbol \nabla}_{\ub} \hat{B}_{rst} ^T ({\ub}) J(\ub)
    {\boldsymbol K} ^T (\ub) {\boldsymbol K}(\ub) \;{\boldsymbol \nabla}_{\ub} \hat{B}_{ijk} ({\ub}) \; d \Pc\\
 S_{ijk} &=   \int_\Omega g({\bf x})  \; B_{ijk} ({\bf x}) \;
 d \Omega \\
   &=     \int_\Pc  \hat{B}_{ijk} ({\ub}) J(\ub) g({T(\ub)}) \; d \Pc.
\label{eq:system_iso}
\end{split}
\end{equation}
Here $J$ is the Jacobian of the transformation from physical domain
to parametric domain, and $\boldsymbol K$ is the inverse of the
Jacobian matrix. 

\subsection{Explicit representation of stiffness matrix entries}
\label{subsection:stiffness}
Suppose that the entries of the stiffness matrix are denoted by $ M_{ijk,rst}  =
\int_{0}^{1}\int_{0}^{1}\int_{0}^{1}  F(\ub) d\ub $, we can derive the
following proposition. 
\begin{proposition} \label{propostion:stiffness}
$F(\ub)$ can be represented as a trivariate rational \Bezier function
as follows
\begin{equation}\label{eqn:F(u)}
F(\ub)=\frac{\sum_{i=0}^{6l-4}\sum_{j=0}^{6m-4}\sum_{k=0}^{6n-4} F_{ijk} B_i^{6l-4}(u) B_j^{6m-4}(v) B_k^{6n-4}(w)}{\sum_{i=0}^{3l-1}\sum_{j=0}^{3m-1}\sum_{k=0}^{3n-1} J_{ijk} B_i^{3l-1}(u) B_j^{3m-1}(v) B_k^{3n-1}(w)}.
\end{equation}
\end{proposition}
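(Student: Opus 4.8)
The plan is to remove every rational dependence on the inverse Jacobian and reduce the integrand to a single scalar denominator. Writing $\mathbf{J}$ for the $3\times 3$ Jacobian matrix of $\Sc$, we have $\boldsymbol{K}=\mathbf{J}^{-1}=\mathrm{adj}(\mathbf{J})/\det\mathbf{J}$, so that $J\,\boldsymbol{K}^{T}\boldsymbol{K}=\mathrm{adj}(\mathbf{J})^{T}\mathrm{adj}(\mathbf{J})/\det\mathbf{J}$. Substituting this into the integrand of $M_{ijk,rst}$ in \eqref{eq:system_iso} gives
\[
F(\ub)=\frac{{\boldsymbol \nabla}_{\ub}\hat{B}_{rst}^{T}(\ub)\,\mathrm{adj}(\mathbf{J})^{T}\mathrm{adj}(\mathbf{J})\,{\boldsymbol \nabla}_{\ub}\hat{B}_{ijk}(\ub)}{\det\mathbf{J}(\ub)},
\]
where both the numerator and the denominator are genuine polynomials in $(u,v,w)$. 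The crucial structural point is that the single denominator $\det\mathbf{J}$ is shared by every entry of the stiffness matrix, which is precisely what makes the representation reusable.

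First I would settle the denominator. Since the coordinate functions of $\Sc$ have tri-degree $(l,m,n)$, the three columns of $\mathbf{J}$ are the partial derivatives $\Sc_{u},\Sc_{v},\Sc_{w}$ of tri-degrees $(l-1,m,n)$, $(l,m-1,n)$ and $(l,m,n-1)$. The determinant is tri-linear in these columns, hence $\det\mathbf{J}$ has tri-degree $(3l-1,3m-1,3n-1)$; expanding it with the product rule of Proposition~\ref{prop:prop} (after elevating every factor to the common degree via the degree-elevation identity) yields exactly the Bernstein coefficients $J_{ijk}$ appearing in the denominator of \eqref{eqn:F(u)}.

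Next I would count the numerator. Each entry of $\mathrm{adj}(\mathbf{J})$ is a $2\times 2$ minor, i.e.\ a product of two column-derivative entries, and it multiplies a single first-order derivative of a Bernstein basis function, where differentiation lowers the degree by one in the differentiated variable. Collecting the two adjugate factors together with the two basis-function gradients and tracking the degree drops variable by variable produces the numerator tri-degree $(6l-4,6m-4,6n-4)$ recorded in \eqref{eqn:F(u)}. The numerator is then assembled in Bernstein form just as the denominator was: every cofactor, every derivative $\partial_{u}\hat{B}$, and every product is first elevated to the common tri-degree and multiplied termwise through Proposition~\ref{prop:prop}, after which the terms of the contraction ${\boldsymbol \nabla}_{\ub}\hat{B}_{rst}^{T}\,\mathrm{adj}(\mathbf{J})^{T}\mathrm{adj}(\mathbf{J})\,{\boldsymbol \nabla}_{\ub}\hat{B}_{ijk}$ are summed coefficientwise to give the $F_{ijk}$.

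The routine part is this conversion to Bernstein form, a direct if tedious application of Proposition~\ref{prop:prop} and degree elevation. The main obstacle is the degree bookkeeping: one must check that the mixed tri-degrees coming from the different cofactors and from the three components of each gradient all collapse, after elevation, onto the single common tri-degree claimed in \eqref{eqn:F(u)}, and organise the $\mathrm{adj}(\mathbf{J})^{T}\mathrm{adj}(\mathbf{J})$ contraction so that every contribution lands in the same Bernstein basis before the coefficients are added. Once this bookkeeping is carried through, $F(\ub)$ is manifestly the stated trivariate rational \Bezier function.
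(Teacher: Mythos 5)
Your route is the paper's route: the paper also clears the inverse Jacobian by writing each entry of $\boldsymbol K$ as a cofactor divided by $J$ (the explicit formulas for $\xi_x,\eta_x,\zeta_x,\dots$), so that $J\,\boldsymbol K^T\boldsymbol K$ becomes your $\mathrm{adj}(\mathbf J)^T\mathrm{adj}(\mathbf J)/\det\mathbf J$ written out as the symmetric matrix with entries $a,\dots,f$; the denominator is then identified with the Bernstein expansion of $J$ of tri-degree $(3l-1,3m-1,3n-1)$, and the numerator coefficients $F_{ijk}$ are assembled by repeated use of the product formula of Proposition~\ref{prop:prop}. Your adjugate packaging is only a notationally tidier version of the same decomposition, and your observation that the single denominator $\det\mathbf J$ is shared by all entries is exactly what the paper exploits.

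The gap is the step you explicitly defer. You assert that ``tracking the degree drops variable by variable produces the numerator tri-degree $(6l-4,6m-4,6n-4)$,'' but carrying out that tracking does not yield those numbers. After degree elevation, each product of a cofactor with the matching partial of $\hat B$ has tri-degree exactly $(3l-1,3m-1,3n-1)$: for instance the $u$-degree of $(y_\eta z_\zeta-y_\zeta z_\eta)\,\partial_\xi\hat B_{rst}$ is $2l+(l-1)=3l-1$, and that of $(y_\xi z_\zeta-y_\zeta z_\xi)\,\partial_\eta\hat B_{rst}$ is $(2l-1)+l=3l-1$. Hence each component of $\mathrm{adj}(\mathbf J)\,{\boldsymbol\nabla}_{\ub}\hat B$ has the same tri-degree as $J$ itself, and the numerator, a sum of products of two such components, has generic tri-degree $(6l-2,6m-2,6n-2)$ rather than $(6l-4,6m-4,6n-4)$; already for $l=m=n=1$ one gets $(4,4,4)$ unless the parameterization is affine. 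To reach the degrees claimed in Eq.(\ref{eqn:F(u)}) you would have to exhibit an identical vanishing of the top two Bernstein coefficients in each direction, which does not hold for generic control points. So the bookkeeping you single out as ``the main obstacle'' is indeed the crux, it is not routine, and as written your argument does not close it --- you would need either to supply the missing cancellation or to restate the numerator degrees as $(6l-2,6m-2,6n-2)$ and propagate that change through the index ranges of the subsequent linear system.
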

\begin{proof}
\begin{eqnarray}
F(\ub) & = & {\boldsymbol \nabla}_{\ub}\hat{B}_{rst} ^T
 ({\ub}) J(\ub) {\boldsymbol K} ^T (\ub) {\boldsymbol K}(\ub) \;{\boldsymbol \nabla}_{\ub}
 \hat{B}_{ijk} ({\ub}) \nonumber \\
   & = &  J(\ub)\left({\begin{array}{ccc}
      \frac{\partial \hat{B}_{rst}({\ub})}{\partial \xi}  &
       \frac{\partial \hat{B}_{rst}({\ub})}{\partial \eta}   &
        \frac{\partial \hat{B}_{rst}({\ub})}{\partial \zeta}  
 \end{array}} \right)  \left({\begin{array}{ccc}
       \frac{\partial \xi}{\partial x} & \frac{\partial \eta}{\partial x} & \frac{\partial \zeta}{\partial x}  \\
       \frac{\partial \xi}{\partial y} & \frac{\partial \eta}{\partial y} & \frac{\partial \zeta}{\partial y}  \\
       \frac{\partial \xi}{\partial z} & \frac{\partial \eta}{\partial
         z} & \frac{\partial \zeta}{\partial z}  \\\end{array}}
 \right)\left({\begin{array}{ccc}
       \frac{\partial \xi}{\partial x} & \frac{\partial \xi}{\partial y} & \frac{\partial \xi}{\partial z}  \\
       \frac{\partial \eta}{\partial x} & \frac{\partial \eta}{\partial y} & \frac{\partial \eta}{\partial z}  \\
       \frac{\partial \zeta}{\partial x} & \frac{\partial \zeta}{\partial y} & \frac{\partial \zeta}{\partial z}  \\\end{array}} \right)\left({\begin{array}{c}
      \frac{\partial \hat{B}_{ijk}({\ub})}{\partial \xi}  \\
       \frac{\partial \hat{B}_{ijk}({\ub})}{\partial \eta}   \\
        \frac{\partial \hat{B}_{ijk}({\ub})}{\partial \zeta}  \\
 \end{array}} \right)  \nonumber \\
&=& J(\ub)\left({\begin{array}{ccc}
      \frac{\partial \hat{B}_{rst}({\ub})}{\partial \xi}  &
       \frac{\partial \hat{B}_{rst}({\ub})}{\partial \eta}   &
        \frac{\partial \hat{B}_{rst}({\ub})}{\partial \zeta}  
 \end{array}} \right)\left({\begin{array}{ccc}
      a & b  & c  \\
      b & d  & e  \\
      c & e  & f  \\
 \end{array}} \right) \left({\begin{array}{c}
      \frac{\partial \hat{B}_{ijk}({\ub})}{\partial \xi}  \\
       \frac{\partial \hat{B}_{ijk}({\ub})}{\partial \eta}   \\
        \frac{\partial \hat{B}_{ijk}({\ub})}{\partial \zeta}  \\
 \end{array}} \right)\nonumber \\
&=& J(\ub) ( a \frac{\partial \hat{B}_{rst}({\ub})}{\partial
  \xi}\frac{\partial \hat{B}_{ijk}({\ub})}{\partial
  \xi}+d\frac{\partial \hat{B}_{rst}({\ub})}{\partial \eta}\frac{\partial \hat{B}_{ijk}({\ub})}{\partial \eta}
+f\frac{\partial \hat{B}_{rst}({\ub})}{\partial \zeta}\frac{\partial
  \hat{B}_{ijk}({\ub})}{\partial \zeta}\\ &&+2b \frac{\partial \hat{B}_{rst}({\ub})}{\partial
  \xi}\frac{\partial \hat{B}_{ijk}({\ub})}{\partial
  \eta}+2c \frac{\partial \hat{B}_{rst}({\ub})}{\partial
  \xi}\frac{\partial \hat{B}_{ijk}({\ub})}{\partial
  \zeta}+ 2e \frac{\partial \hat{B}_{rst}({\ub})}{\partial
  \eta}\frac{\partial \hat{B}_{ijk}({\ub})}{\partial
  \zeta})\nonumber 
\end{eqnarray}
in which 
\begin{eqnarray*} 
      a = \xi^{2}_{x}+ \eta^{2}_{x}+  \zeta^{2}_{x} & b = \xi_x\xi_y
      +\eta_x\eta_y+\zeta_x\zeta_y  &
 c = \xi_x\xi_z +\eta_x\eta_z+\zeta_x\zeta_z  \\ d=\xi^{2}_{y}+
 \eta^{2}_{y}+  \zeta^{2}_{y} &
e= \xi_y\xi_z +\eta_y\eta_z+\zeta_y\zeta_z   & f =  \xi^{2}_{z}+ \eta^{2}_{z}+  \zeta^{2}_{z} 
\end{eqnarray*}
\[
\xi_x=\frac{y_\eta z_\zeta-y_\zeta z_\eta}{J}, \xi_y=-\frac{x_\eta z_\zeta-x_\zeta z_\eta}{J},  \xi_z=\frac{y_\eta x_\zeta-y_\zeta x_\eta}{J}, 
\]
\[
\eta_x=\frac{y_\xi z_\zeta-y_\zeta z_\xi}{J}, \eta_y=-\frac{x_\xi z_\zeta-x_\zeta z_\xi}{J},  \eta_z=\frac{y_\xi x_\zeta-y_\zeta x_\xi}{J}, 
\]
\[
\zeta_x=\frac{y_\eta z_\xi-y_\xi z_\eta}{J}, \zeta_y=-\frac{x_\eta z_\xi-x_\xi z_\eta}{J},  \zeta_z=\frac{y_\eta x_\xi-y_\xi
  x_\eta}{J}. 
\]
\begin{eqnarray}
J ({\ub}) &= &\left|{\begin{array}{ccc}
       x_\xi & y_\xi & z_\xi  \\
       x_\eta & y_\eta & z_\eta  \\
       x_\zeta & y_\zeta & z_\zeta  \\
 \end{array}} \right| =
\sum_{i=0}^{3l-1}\sum_{j=0}^{3m-1}\sum_{k=0}^{3n-1} J_{ijk}
B_i^{3l-1}(u) B_j^{3m-1}(v) B_k^{3n-1}(w), 
\label{eq:J}
\end{eqnarray}
in which  $J_{ijk}$ has the following form as given in \cite{qian:cad}
\begin{equation}\label{jacobian}
J_{ijk}=\sum\limits_{\substack{i_1+i_2+i_3=i\\i_1\in[0,l-1]\\i_2\in[0,l]\\i_3\in[0,l]}}\sum\limits_{\substack{j_1+j_2+j_3=j\\j_1\in[0,m]\\j_2\in[0,m-1]\\j_3\in[0,m]}}\sum\limits_{\substack{k_1+k_2+k_3=k\\k_1\in[0,n]\\k_2\in[0,n]\\k_3\in[0,n-1]}}
D_{ijk} \cdot
 \mathbf{det} \left({\begin{array}{c}
      \pb_{i_1+1,j_1,k_1}-\pb_{i_1,j_1,k_1}\\
      \pb_{i_2,j_2+1,k_2}-\pb_{i_2,j_2,k_2}  \\
      \pb_{i_3,j_3,k_3+1}-\pb_{i_3,j_3,k_3}) \\
 \end{array}} \right)^T,     
\end{equation}
with
\begin{equation}\label{Dijk}
D_{ijk}=lmn\frac{{l-1 \choose i_1}{l \choose i_2}{l \choose i_3}{m \choose
    j_1}{m-1 \choose j_2}{m \choose j_3}{n \choose k_1}{n \choose
    k_2}{n-1 \choose k_3}}{{3l-1 \choose i}{3m-1 \choose j}{3n-1
    \choose k}}. 
\end{equation}

\noindent According to Eq.(8) the product formula of two trivariate
Bernstein polynomials in Proposition \ref{prop:prop}, we can rewritten $F(\ub)$ as a high-order 
trivariate rational Bernstein polynomial, 
\begin{equation}\label{eqn:entry}
F(\ub) =\frac{\sum_{i=0}^{6l-4}\sum_{j=0}^{6m-4}\sum_{k=0}^{6n-4} F_{ijk} B_i^{6l-4}(u) B_j^{6m-4}(v) B_k^{6n-4}(w)}{\sum_{i=0}^{3l-1}\sum_{j=0}^{3m-1}\sum_{k=0}^{3n-1} J_{ijk} B_i^{3l-1}(u) B_j^{3m-1}(v) B_k^{3n-1}(w)}
\end{equation}
in which $F_{ijk}$ can be computed according to
Eq.(\ref{eq:product}). 
\end{proof}

In general cases, the integration of a rational \Bezier function over
$[0,1]$ is either very  complex or has no analytically solution. 
Gaussian-quadrature method is usually employed in general IGA to 
compute the integration of rational function in Eq.(\ref{eq:system_iso})
approximately. As shown in Eq.(\ref{eq:integration}), the integration of polynomial
\Bezier functions (non-rational) has an explicit
and exact form. As a classical problem in CAGD,  approximating rational \Bezier curves and surfaces
with polynomial \Bezier curves and surfaces has been studied in
\cite{Hu:cam2013,Shimao:2015,Wang:jat1997}. In this paper,  we
further extend the weighted least-squares approach \cite{Shimao:2015} to trivariate splines and
approximate the trivariate rational function $F(\ub)$ with
a trivariate polynomial \Bezier function
$G(\ub)=\sum_{i=0}^{\alpha}\sum_{j=0}^{\beta}\sum_{k=0}^{\gamma}
G_{ijk} B_i^{\alpha}(u) B_j^{\beta}(v) B_k^{\gamma}(w)$.

Suppose that $F(\ub)$ can be rewritten as $F(\ub)=\frac{\D F_1(u,v,w)}{\D F_2(u,v,w)}$, 
our \Bezier approximation problem can be stated as that  to find the
control variables $G_{ijk}$, which can make the trivariate \Bezier
representation $G(u,v,w)=\sum_{i=0}^{\alpha}\sum_{j=0}^{\beta}\sum_{k=0}^{\gamma}
G_{ijk} B_i^{\alpha}(u) B_j^{\beta}(v) B_k^{\gamma}(w)$ become the best
approximation of $F(\ub)$. That is, to minimize the following
objective function when $F_2(u,v,w)$ is set to be the weight function
\[
D(F,G)=\int_0^1\int_0^1\int_0^1 F_2(u,v,w)(\frac{F_1(u,v,w)}{F_2(u,v,w)}-G(u,v,w))^2 dudvdw
\] 
Hence, the coefficient  $G_{ijk}$  can be obtained by letting 
\begin{equation}\label{eq:gijk}
\frac{\partial D(F,G)}{\partial G_{ijk}} =0, 
\end{equation} 
which is 
\begin{equation}\label{eq:gijk}
2 \int_0^1\int_0^1\int_0^1(F_1(u,v,w)-F_2(u,v,w)G(u,v,w)) B_i^{\alpha}(u) B_j^{\beta}(v)
B_k^{\gamma}(w) dudvdw =0. 
\end{equation} 

\noindent Eq.(\ref{eq:gijk}) can be rewritten as 
\begin{eqnarray*}
\int_0^1\int_0^1\int_0^1 F_2(u,v,w)G(u,v,w) B_i^{\alpha}(u) B_j^{\beta}(v)
B_k^{\gamma}(w) dudvdw  \\
= \int_0^1\int_0^1\int_0^1 F_1(u,v,w) B_i^{\alpha}(u) B_j^{\beta}(v)
B_k^{\gamma}(w) dudvdw 
\end{eqnarray*}

From the product and integral computation properties of \Bezier polynomials, we have 
\begin{eqnarray}\label{eq:linear}
 \sum_{a=0}^{\alpha+3l-1}\sum_{b =
  0}^{\beta+3m-1}\sum_{c=0}^{\gamma+3n-1} \frac{\T H_{a,b,c}}{\T {2\alpha+3l-1 \choose a+i}{2\beta+3m-1
    \choose b+j}{2\gamma+3n-1 \choose c+k}}  \\
= \sigma \sum_{p=0}^{6l-4}\sum_{q = 0}^{6m-4}\sum_{r=0}^{6n-4} \frac{\T {6l-4 \choose
    p} {6m-4 \choose
    q} {6n-4 \choose
    r} }{\T {\alpha+6l-4 \choose p+i}{\beta+6m-4 \choose
    q+j}{\gamma+6n-4 \choose r+k}}  F_{p,q,r}, 
\end{eqnarray}
in which 
\begin{equation}\label{sigma}
\sigma=
\frac{(2\alpha+3l)(2\beta+3m)(2\gamma+3n)}{(\alpha+6l-3)(\beta+6m-3)(\gamma+6n-3)}, 
\end{equation}
\[
H_{a,b,c}=\sum_{r=max(0,a-\alpha)}^{min(a,3l-1)}\sum_{s=max(0,b-\beta)}^{min(b,3m-1)}\sum_{t=max(0,c-\gamma)}^{min(c,3n-1)}\T {3l-1 \choose
    r}{\alpha \choose a-r}{3m-1 \choose s}{\beta \choose b-s}{3n-1 \choose t}{\gamma
    \choose c-t}J_{rst}G_{{(a-r)},{(b-s)},{(c-t)}}
\]

\noindent Then, from Eq.(\ref{eq:linear}) for all the $G_{ijk}$, we
can obtain a linear system  in the following form,   
\begin{equation}\label{system}
{\bf L}\cdot {\bf E} \cdot {\bf G}=\sigma \cdot {\bf Q} \cdot {\bf F}, 
\end{equation}
in which $\mathbf{G} =[G_{ijk}]_{(\alpha+1) \times (\beta+1) \times (\gamma+1)}$
is a vector with unknown
variables as entries, $\sigma$ is defined in Eq.(\ref{sigma}), 
$\mathbf{L}=
[L_{ijk}^{abc}]$ is a matrix with  
\begin{equation}\label{eqn:L}
L_{ijk}^{abc}=\frac{\T 1}{\T {2\alpha+3l-1 \choose a+i}{2\beta+3m-1
    \choose b+j}{2\gamma+3n-1 \choose c+k}},  
\end{equation}
$\mathbf{E}=
[E_{rst}^{abc}]$ is a matrix with
\begin{equation}\label{eqn:E}
E_{rst}^{abc}= {3l-1 \choose
    r}{\alpha \choose a-r}{3m-1 \choose s}{\beta \choose b-s}{3n-1 \choose t}{\gamma
    \choose c-t}J_{rst}, 
\end{equation}
$\mathbf{Q}=
[Q_{ijk}^{pqr}]$ is a matrix with
\begin{equation}\label{eqn:Q}
Q_{ijk}^{pqr}=  \frac{\T {6l-4 \choose
    p} {6m-4 \choose
    q} {6n-4 \choose
    r} }{\T {\alpha+6l-4 \choose p+i}{\beta+6m-4 \choose
    q+j}{\gamma+6n-4 \choose r+k}}, 
\end{equation}
and $\mathbf{F}=[F_{pqr}]$ is a vector with $F_{pqr}$ as entries defined
in Eq.(\ref{eqn:entry}). 

By solving this linear system, the \Bezier approximation  $G(u,v,w)$ of
the rational function $F(\ub)$ can be obtained as 
\begin{equation}\label{solution}
{\bf G}=\sigma \cdot {\bf E}^{-1}\cdot {\bf L}^{-1} {\bf Q}
\cdot {\bf F}. 
\end{equation}
By using Eq.(\ref{eq:integration}) in Lemma 4.2, the entries
$M_{ijk,rst}$  of the stiffness matrix can be evaluated by the following explicit form
\begin{equation}
M_{ijk,rst}=\int_{0}^{1}\int_{0}^{1}\int_{0}^{1}  F(\ub) d\ub \approx
\int_{0}^{1}\int_{0}^{1}\int_{0}^{1}  G(\ub) d\ub 
=\frac{1}{(\alpha+1)(\beta+1)(\gamma+1)}\sum_{i=0}^{\alpha}\sum_{j=0}^{\beta}\sum_{k=0}^{\gamma}G_{ijk}. 
\end{equation}

\begin{figure}[!t]
\centering
\begin{minipage}[t]{1.0\figwidth}
\centering
\includegraphics[ width=1.0\textwidth]{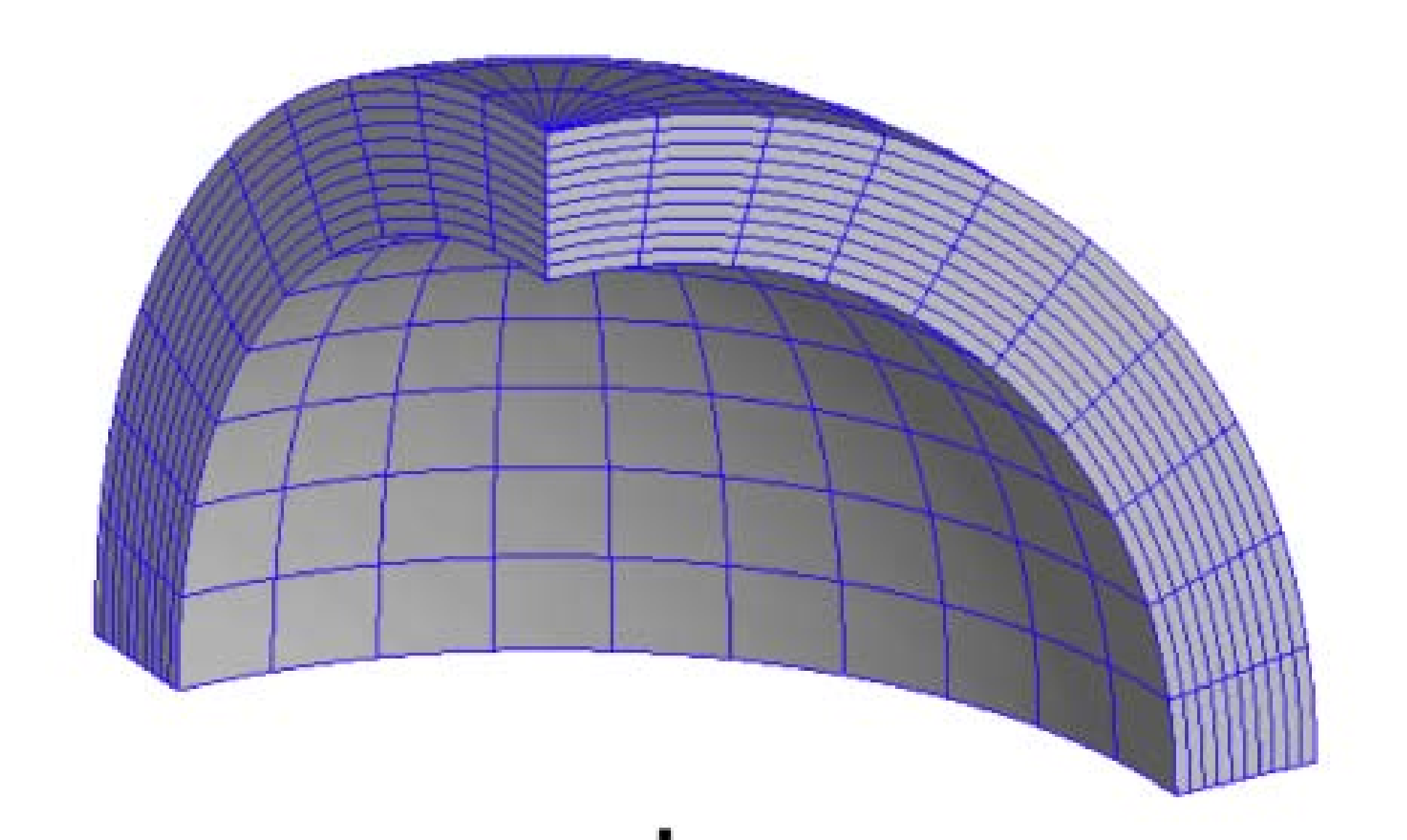}
\\ (a)  1/8th of a hollow sphere
\end{minipage}
\begin{minipage}[t]{1.0\figwidth}
\centering
\includegraphics[ width=1.0\textwidth]{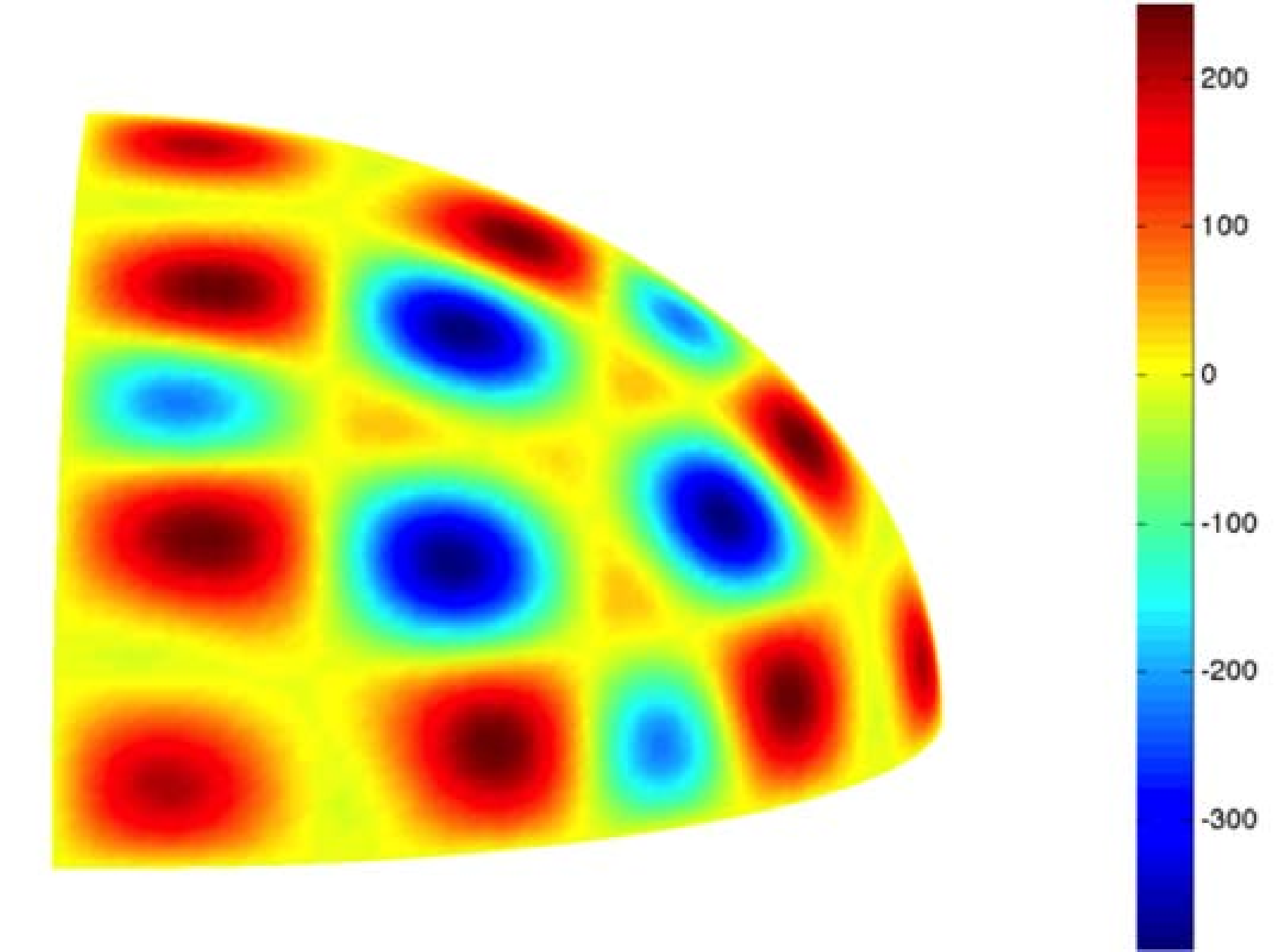}
\\ (b)  exact solution
\end{minipage}\\
\begin{minipage}[t]{1.0\figwidth}
\centering
\includegraphics[  width=1.\textwidth]{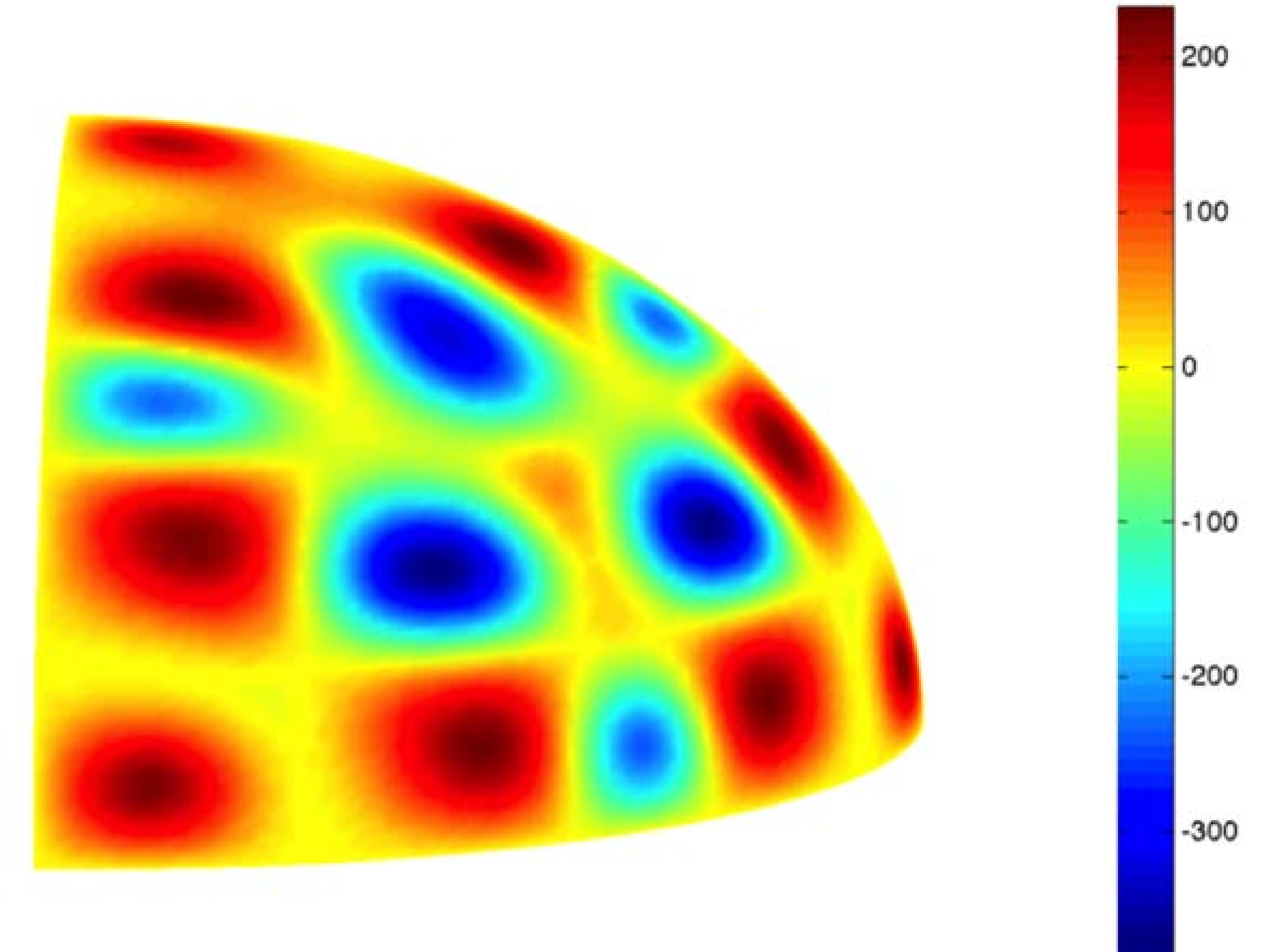}
\\ (c)  IGA-Galerkin solution
\end{minipage} 
\begin{minipage}[t]{1.0\figwidth}
\centering
\includegraphics[ width=1. \textwidth]{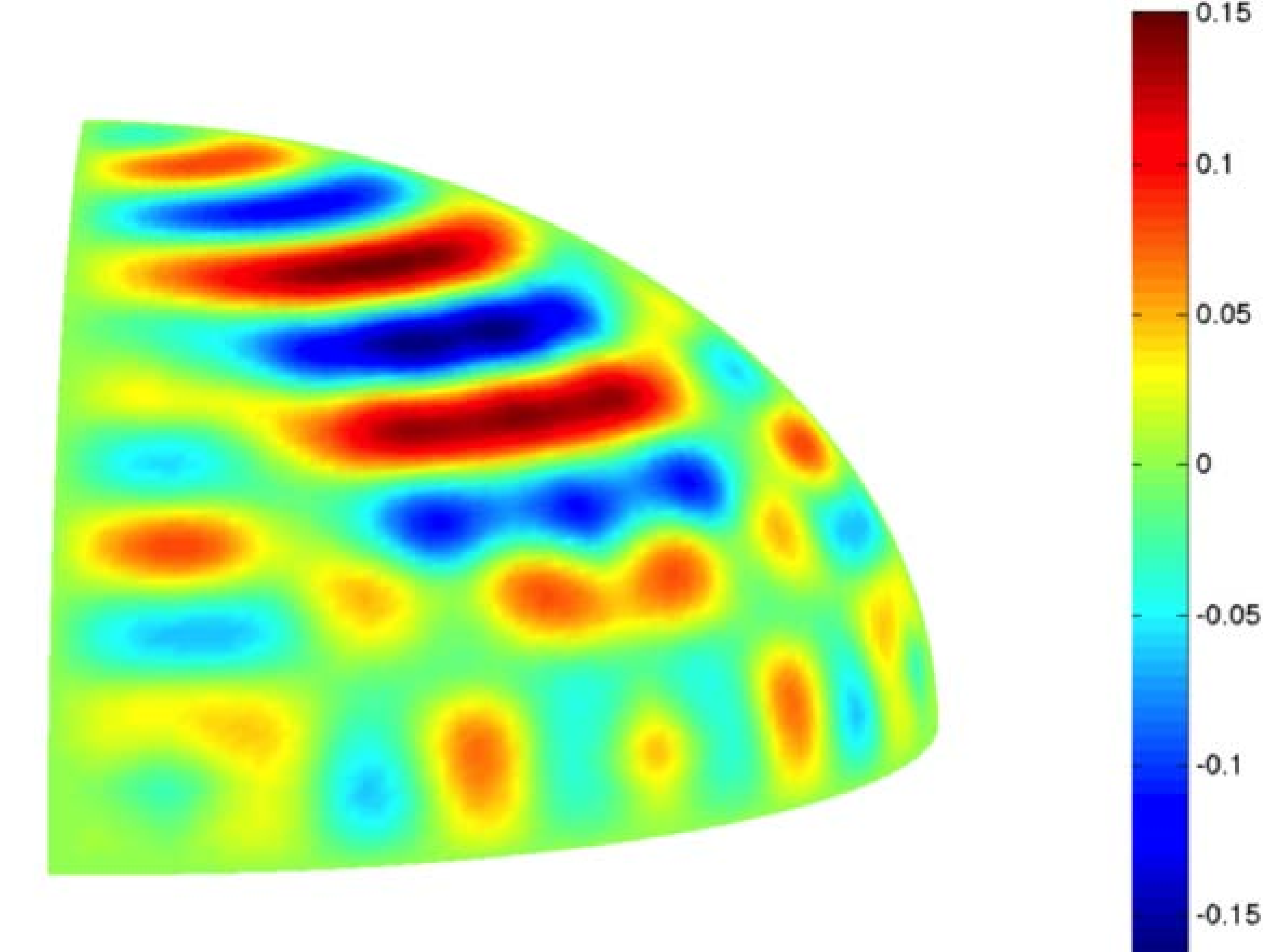}
\\ (d)  IGA-Galerkin errors 
\end{minipage}\\
\begin{minipage}[t]{1.0\figwidth}
\centering
\includegraphics[ width=1.\textwidth]{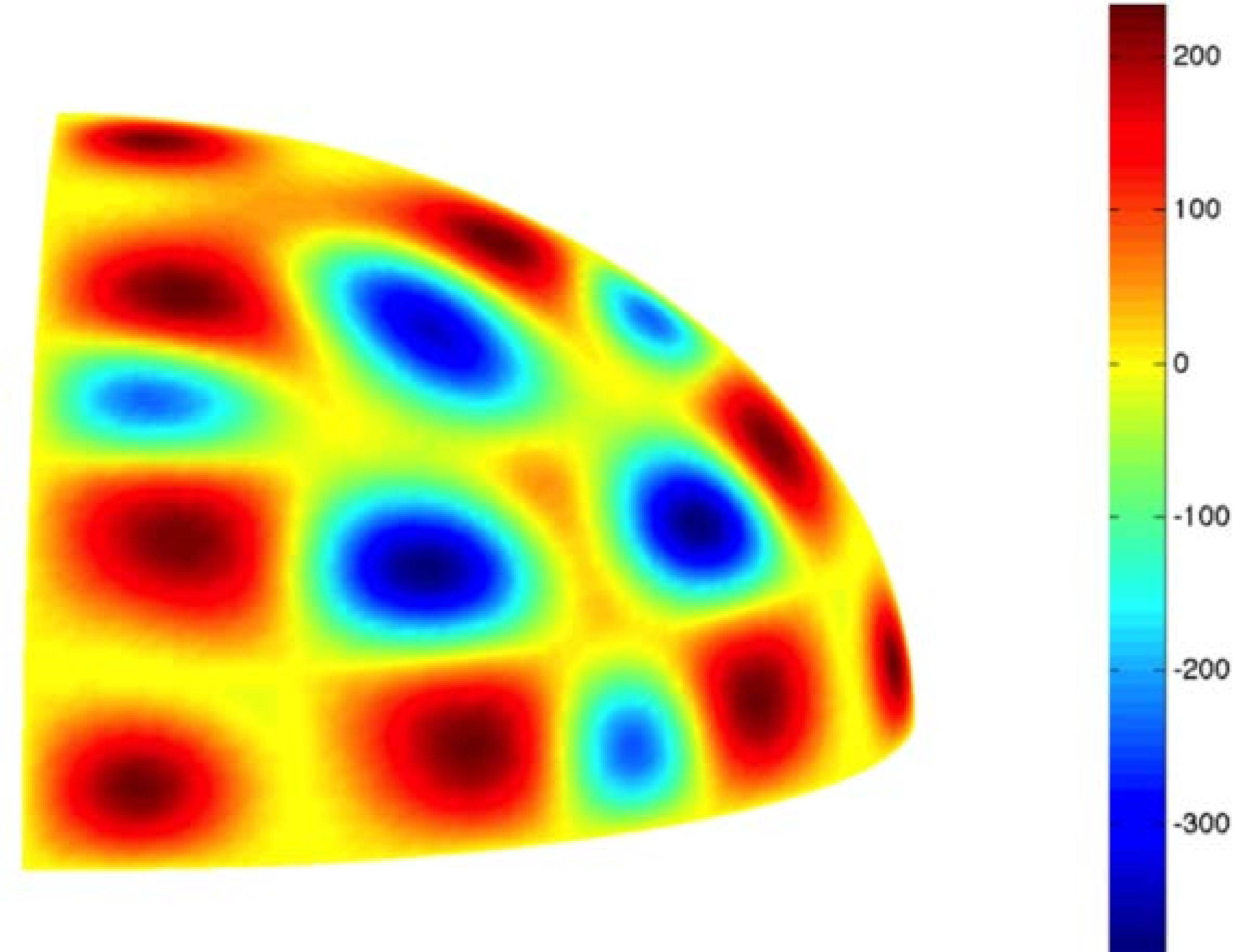}
\\ (e)  solution w.r.t the proposed method   
\end{minipage} 
\begin{minipage}[t]{1.0\figwidth}
\centering
\includegraphics[ width=1.\textwidth]{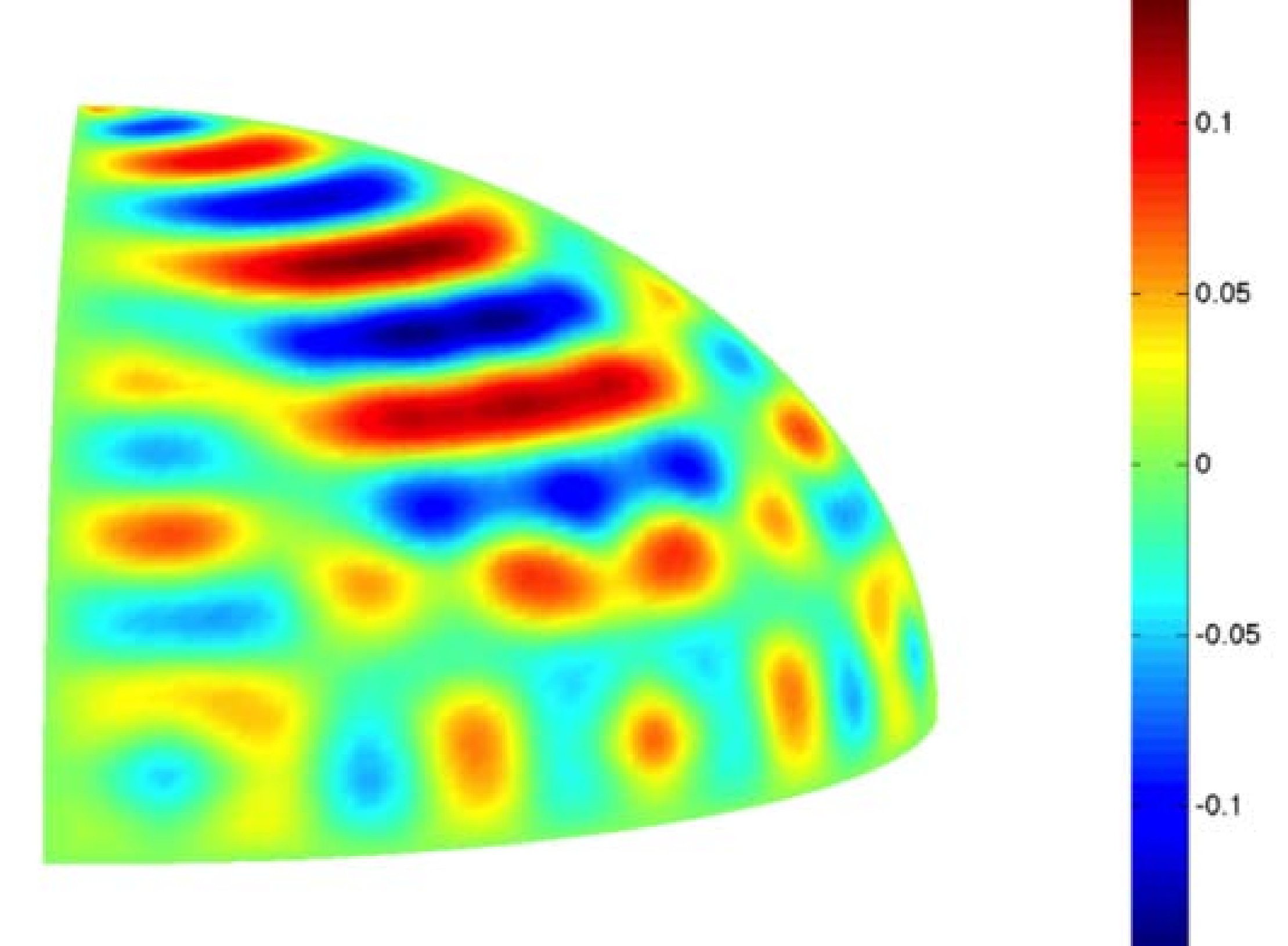}
\\ (f)  error w.r.t the proposed method   
\end{minipage}\\
\caption{Hollow sphere model problem with the solution and errors
  plotted on the isoparametric surface with $w=0.6$. (a) volume
  parameterization; (b) exact solution; (c) IGA-Galerkin solution; 
(d) IGA-Galerkin error; (e) solution of the proposed quadrature-free
method; (f) error of the proposed quadrature-free method. }
\label{fig:simulation}
\end{figure}

\begin{figure}[!t]
\centering
\begin{minipage}[t]{1.5\figwidth}
\centering
\includegraphics[width=1.5\figwidth]{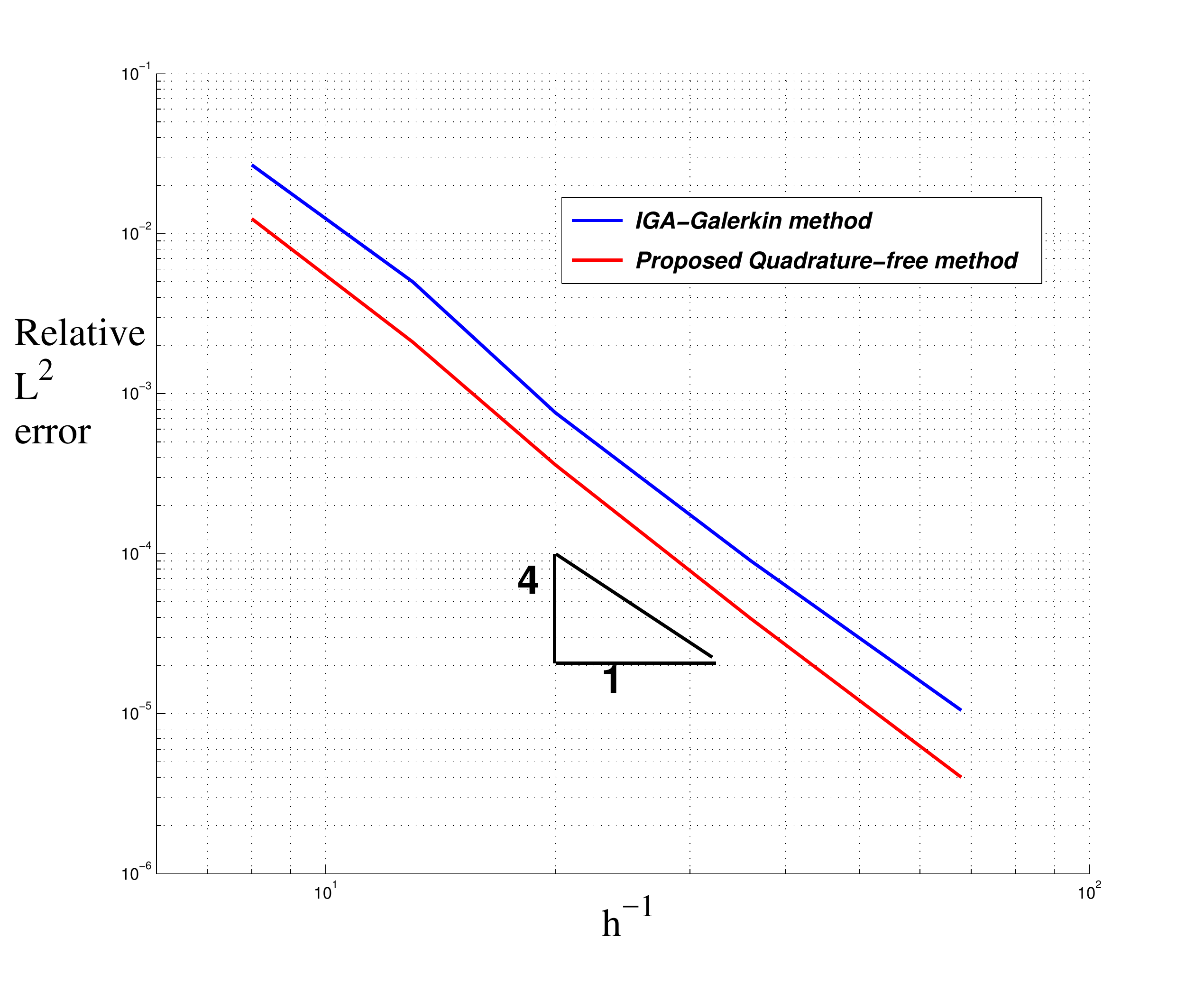}
\end{minipage}\\
\caption{Error history during the $h$-refinement process. }\label{fig:errorhistory}
\end{figure}

Obviously, if $G(u,v,w)$ is the exact representation of the rational
function $F(\ub)$, then we can achieve the exact solution of the
model problem by the proposed quadrature-free method. 
On the other hand, the accuracy of the proposed method depends on the 
degree of the \Bezier approximation $G(u,v,w)$, i.e,   $\alpha, \beta$
and $\gamma$. In our experimental results, we set $\alpha= 3l-3,
\beta=3m-3, \gamma=3n-3$ for the initial value. Similar with the
IGA-Galerkin method, there are three possible ways to improve the
approximation accuracy: (1) the
first possibility is to approximate $F(\ub)$ with piecewise \Bezier
polynomial with the same degree, which is similar with the
$h$-refinement by knot insertion in IGA-Galerkin method; (2) the second
way is to elevate the degree of the \Bezier approximation $G(u,v,w)$, 
which is similar with the $p$-refinement in IGA-Galerkin method; (3) the
third approach is to combine the piecewise method and the degree elevation 
method, which is similar with the $k$-refinement in IGA-Galerkin
method. 

%%revised on August 8 2016

In order to show the accuracy of the proposed quadrature-free method, 
some tests have been performed in our paper. In the presented numerical example, a
three-dimensional parameterization of 1/8th of a hollow sphere
is constructed with cubic B-spline volume as shown in Fig.\ref{fig:simulation}(a),  and the
source function $g({\bf x})$ in the model problem (\ref{eq:heat})  
with boundary condition
$T_0(\textbf{\emph{x}})=0$ is 
constructed such that the exact solution  (Fig.\ref{fig:simulation}(b))  is 
\begin{equation}
T(x,y,z)=\sin(x)\sin(y)\sin(z)(x^2+y^2+z^2-(R+r)^2) (x^2+y^2+z^2-(R-r)^2)
\end{equation}
in which $R=10$ and $r=1$. The simulating solution $T_h$ on the
isoparametric surface with $w=0.6$ of the proposed
quadrature-free method and the IGA-Galerkin method are illustrated in
Fig. \ref{fig:simulation}(c) and Fig.\ref{fig:simulation}(e), 
the corresponding error $(T-T_h)/|T|$ are plotted as
presented in Fig. \ref{fig:simulation}(d) and
Fig. \ref{fig:simulation}(f). Moreover, the relative $L^2$ error history during the
$h$-refinement by knot insertion for this numerical example is
presented in Fig. \ref{fig:errorhistory}. We can find that the
proposed quadrature-free method have the comparable accuracy with the
IGA-Galerkin method.  

\section{Computation-reuse for models with consistent volume parameterization}
\label{sec:example}

In this section, we present the details of our computation reuse framework for a set of
models with consistent volume parameterization based on the 
proposed quadrature-free IGA method. In the first phase, we
obtain consistent B-spline volumetric parameterization from given B-rep models 
and the template based domain. In the second phase, 
we perform analysis-reuse on the resulting consistent volumetric 
parameterization.  

\subsection{Framework overview}
\label{subsec:overview}

Generally, given a set of CAD models with consistent topology,  
our computation reuse framework can be described as following:

\noindent \textbf{Input}: a set of CAD models with consistent topology

\noindent \textbf{Output}: IGA results on all models  
\begin{enumerate}[{Step}~1:]
\item construct topology-consistent volumetric parameterization for
  the input set of CAD models and the template base domain as
  described in Section \ref{sec:volcon}.
  
\item perform \Bezier extraction for the B-spline based volume parameterization of
  one CAD model, then the conversion matrix can be stored and reused for 
  the \Bezier extraction of volume parameterization of other CAD models.       
  
\item for a specified PDE problem (i.e., heat conduction problem), impose the boundary condition by 
boundary collocation method described in subsection \ref{sec:BCimposition}. 

\item solve the specified PDE problem on the CAD model in Step 3 with
  the quadrature-free IGA method proposed in Section \ref{sec:analysis}. 
  
\item solve the specified PDE problem for the other CAD models, in
  which the boundary collocation matrix can be reused and the entry evaluation of 
  stiffness matrix in Step 4 can be partially reused. 
  
\item output the results of IGA.
\end{enumerate}

%As shown in Section \ref{sec:volcon}, given a template base domain, a CSRBF-based method is employed to construct the interior volumetric parameterization for a set of 3D models with similar semantic features. B-spline boundary surface fitting and analysis-suitable volume construction are used to generate high-quality spline volume parameterization. 

Based on this framework, a significant improvement on the efficiency can be achieved for a set of models with consistent-topology volume parameterizations. In the following subsections, we will discuss the computation reuse mechanism during the entry evaluation of stiffness matrix and the boundary condition imposition.

\subsection{Computational reuse for entries of stiffness matrix}

In our computation resue framework, there are mainly two reuse parts: 
the entry evaluation of element stiffness matrix and the boundary condition 
imposition.

\subsubsection{Entry evaluation of element stiffness matrix} 

After the heat conduction problem is solved on one of the input models, we want to reuse 
some computation for the IGA solving on the other models. As describe in subsection \ref{subsection:stiffness}, there are mainly three computing parts for the entry evaluation of element 
stiffness matrix. 

\noindent 1) The first part is for the computation of $J_{ijk}$ in Proposition
\ref{propostion:stiffness}. From the formula of $J_{ijk}$ as shown 
in Eq.(\ref{jacobian}), the values of $D_{ijk}$ in Eq.(\ref{Dijk}) can be stored and 
reused for the following models. Therefore, we only need to compute 
the coefficients depends on the control points for different \Bezier solids with the same 
degree. 

\noindent 2) The second part is for the computation of $F_{ijk}$ in  Proposition
\ref{propostion:stiffness}. As shown in the proof of Proposition 4.5, the computational of 
$F_{ijk}$ can be considered as a repeated process of production on two trivariate \Bezier functions. 
In summary, we have to apply the production for totally $54$ times to obtain the value of 
$F_{ijk}$. As the set of given B-spline models have the same basis
functions with the same degree, the coefficients which are similar as in Eq.(\ref{Dijk}) can be stored
and reused to compute the $F_{ijk}$ for the other volumetric models 
in a similar way as the computational reuse for $J_{ijk}$.   
 
\noindent 3) The third part is for the \Bezier approximation of the rational
trivariate \Bezier function $F(\ub)$. As shown in Eq.(\ref{solution}),
the \Bezier approximation  $G(u,v,w)$ of $F(\ub)$ can be obtained as 
\begin{equation}\label{solutionreuse}
{\bf G}=\sigma \cdot {\bf E}^{-1}\cdot {\bf L}^{-1} {\bf Q}
\cdot {\bf F}. 
\end{equation}
We can find that  in Eq.(\ref{solutionreuse}), given 
the degree $(\alpha,\beta, \gamma)$ of \Bezier approximation and the
degree $(l,m,n)$ of 
the B-spline model, $\sigma, {\bf L}^{-1}$ and $ {\bf Q}$ are
independent with the control points information. As a result, after 
solving the IGA problem on the first model, $\sigma, {\bf L}^{-1}$ and
${\bf Q}$ can be stored and reused for the following IGA solving
process on  all other models.

\noindent Overall, given a set of volumetric models with the
same B-spline basis function representation (i.e., the same degree and the
same knot vectors for each block), the computation can be partially reused 
after solving the IGA problem on one model. 
For the new IGA solving process, the parts that we need compute for element stiffness matrix filling  are
the following formula involved in the computation of $J_{ijk}$,
\begin{equation}
\mathbf{det} \left({\begin{array}{c}
      \pb_{i_1+1,j_1,k_1}-\pb_{i_1,j_1,k_1}\\
      \pb_{i_2,j_2+1,k_2}-\pb_{i_2,j_2,k_2}  \\
      \pb_{i_3,j_3,k_3+1}-\pb_{i_3,j_3,k_3}) \\
 \end{array}} \right)^T,
\end{equation}
and the vector $\mathbf{F}=[F_{pqr}]$   with $F_{pqr}$ as entries defined
in Eq.(\ref{eqn:F(u)}). 

After the local stiffness matrix for each element are filled, the
global stiffness matrix can be obtained by assembling. In the assembly
process, the boundary condition described in the governing equation
must be imposed and the corresponding entries related to the boundary
condition will also be evaluated. In the following subsection, we will discuss the reuse for the imposition of boundary conditions.

\subsubsection{Imposition of boundary conditions}
\label{sec:BCimposition}

As the Bernstein basis functions do not have interpolating property
at control points,  we cannot 
impose the essential boundary conditions directly onto the   control
variables on the boundary. Special treatments need to be implemented
to achieve the specified boundary conditions, such as the least square
approach, the penalty function method or the Nitsche method. In this
paper, a collocation method is employed to impose boundary
conditions. For a set of computational models with topology-consistent
volume parameterization, the control variables on the boundary can be reused for the same boundary conditions, and the collocation matrix can be reused for different boundary conditions.   

Suppose that $\{{\bf x}_i\}_{i=0}^{n_b}$ are collocation points on the
boundary surface and  $\{\xi_i,\eta_i,\zeta_i \}_{i=0}^{n_b}$ are
corresponding parametric coordinates in the parametric domain, 
the Dirichlet boundary condition $U\|_{\partial \Omega}=h(\{\bf x \})$
can be defined as 
\begin{equation}\label{eqn:boundary}
\sum_{j=1}^k N_j(\xi_i, \eta_i,\zeta_i)b_j =h(\{{\bf x}_i\}), \qquad
\qquad  i=0, \cdots, n_b
\end{equation}
in which $h(\{{\bf x}_i\})$ are boundary values to be interpolated,
and $b_j$ are control variables to be solved. This equation can be rewritten into a matrix form as 
\[
{\bf M} {\bf B}= {\bf H}, 
\] 
in which the entry of matrix ${\bf M}$ is $M_{ij}=N_j(\xi_i,
\eta_i,\zeta_i)$, $\bf H=[h_i]=[h(\{{\bf x}_i\})]$, $i,j=0, \cdots, n_b$ . Then the 
boundary control variables $b_j$ can be solved by 
\begin{equation}\label{imposition}
{\bf B}= {\bf M}^{-1}{\bf H}. 
\end{equation}

In this paper, since the models with topology-consistent 
volume parameterization have the same basis functions for the corresponding blocks, 
the boundary control variables which is determined by Eq.(\ref{imposition}) can be used for all other models for a PDE problem with the same boundary conditions.
Furthermore, for the PDE problem with different boundary conditions, 
the inverse of collocation matrix ${\bf M}^{-1}$ can also be reused for the 
models with topology-consistent volume parameterization. In practice,
we apply LU-decomposition on the sparse matrix ${\bf M}$, and re-use the
decomposition in Eq.(\ref{imposition}) to determine the value of ${\bf B}$.

\subsection{Experimental results}

In this subsection,   experimental results will be presented to show the advantage of the proposed computation reuse method. 

Three sets of CAD models are tested in this paper. The first set has
two hand models (Fig. \ref{fig:hand}), four airplane models are tested
in the second set (Fig. \ref{fig:airplane}), and the third set
consists of four human models with consistent topology
(Fig. \ref{fig:human}). For each model in Figs. \ref{fig:hand}-\ref{fig:human},
from left to right we show the input surface model with patch-partition information, the results of boundary B-spline surface fitting, the results of volume parameterization, and the IGA results for a heat conduction problem respectively.
 
In order to illustrate the effectiveness of the proposed computation reuse approach, the corresponding average assembling time of the IGA and IGA-reuse approach are shown in Table \ref{table:data}
for the models in Figs. \ref{fig:hand}-\ref{fig:human} respectively. 
The corresponding extra storages of the proposed IGA-reuse method are
also given. 
All the computations are implemented in C++ and timed on a Macbook Pro with a 
quad core 2.4 GHz Intel Core i7 processor and 8GB RAM.  From the performance statistic shown in the table, we can find that the computational costs in assembling stiffness matrix can be reduced
significantly (i.e, up to 12 times) by our method while keeping the nice trade-off between
efficiency and storage. Furthermore, as shown in Fig. \ref{fig:datacurve}, the
acceleration ratio keeps increasing while increasing the degree of freedom in IGA.

Overall, since the evaluation of high-order basis function for one model can be reused for other models with
consistent topology, a similar performance of classical linear finite element method can be achieved for isogeometric analysis on a set of models. This addresses a main shortcoming of isogeometric analysis and makes it cost-efficient in solving large-scale problems in computational mechanics. 

\begin{table}  
\caption{Quantitative data and average assembling time for computation reuse in IGA}

\begin{tabular}{|l|l|l|l|l|l|l|l|} % centered columns (4 columns)
    \hline %inserts double horizontal lines
    Example&$p$& $h$ & \#DOF   & IGA  & IGA-reuse & 
    acceleration ratio & extra storage(NNZ) \\ [1.0ex] % inserts table
    % heading
    \hline % inserts single horizontal line
    \hline % insert  second horizontal line   
   \multirow{3}{3em}{Hand (Fig.\ref{fig:hand})} & \multirow{3}{3em}{2}
     & $h=1$&85,169   & 3.93 sec.  & 1.62 sec. & $\times$2.42 & 1,245,194 \\[1.0ex]
    && $h=2$&562,952   & 19.83 sec.  & 5.43 sec. & $\times$3.65 & 12,012,034 \\[1.0ex]
    && $h=3$&4,240,664   & 136.14 sec.  & 15.81 sec. & $\times$8.61& 68,023,002 \\[1.0ex]
\hline    
  \multirow{3}{3em}{Airplane (Fig.\ref{fig:airplane})} & \multirow{3}{3em}{3}
    & $h=1$&35,154  & 3.48 sec. & 1.14 sec. & $\times$3.05 &  262,202\\[1.0ex]
    && $h=2$&263,424  & 14.38 sec.  & 3.53 sec. & $\times$4.07 & 7,002,134\\[1.0ex]
    && $h=3$&1,843,968  & 93.22 sec. & 10.15 sec.  & $\times$9.18& 35,082,344\\[1.0ex]
\hline   
 \multirow{3}{2em}{Human (Fig.\ref{fig:human})} &
   \multirow{2}{3em}{4} & $h=1$  &153,472 & 13.97 sec.  & 2.71 sec. &
   $\times$5.15 & 4,044,174\\[1.0ex]
 & & $h=2$&1,185,408  & 87.45 sec. & 9.24 sec. &   $\times$9.46 &23,432,642\\[1.0ex]
& & $h=3$&7,938,056  & 584.82 sec.  & 46.23 sec. &  $\times$12.65& 165,206,720\\[1.0ex]
\hline 
\end{tabular}
\\
\noindent \#DOF: the degree of freedom ;  \quad $p$: the degree of
basis function; $h$: $h$-refinement step; \\ NNZ: number of none-zero
elements in matrix.  
  \label{table:data} % is used to refer this table in the text
\end{table} 

\begin{figure*}
\centering
\begin{minipage}[t]{6.0in}
\centering
\includegraphics[width=3.4in,height=1.3in]{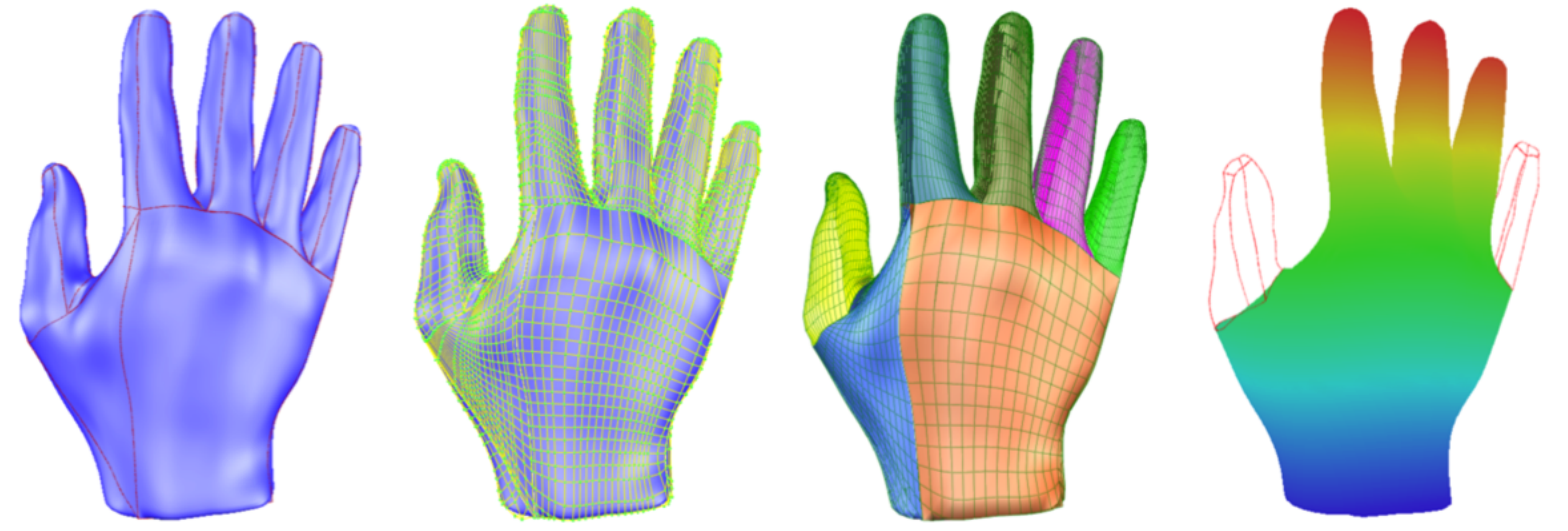}
\\ (a) Hand model I
\end{minipage}\\
\begin{minipage}[t]{6.0in}
\centering
\includegraphics[width=3.4in,height=1.55in]{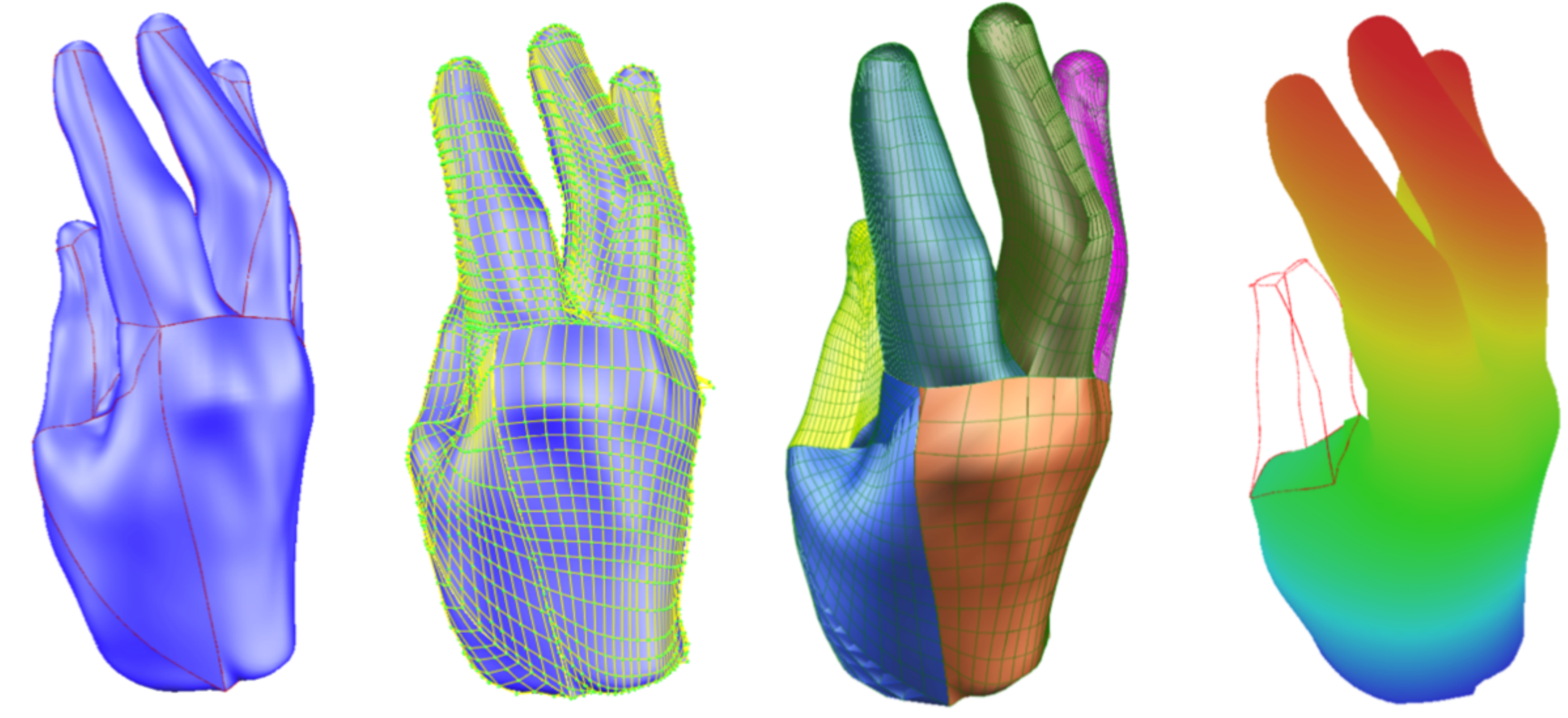}
\\ (b) Hand model II
\end{minipage}\\
\caption{Computation reuse in IGA on human body models with consistent volumetric parameterization.}
\label{fig:hand}
\end{figure*}

\begin{figure*}
\centering
\begin{minipage}[t]{6.0in}
\centering
\includegraphics[width=6.5in]{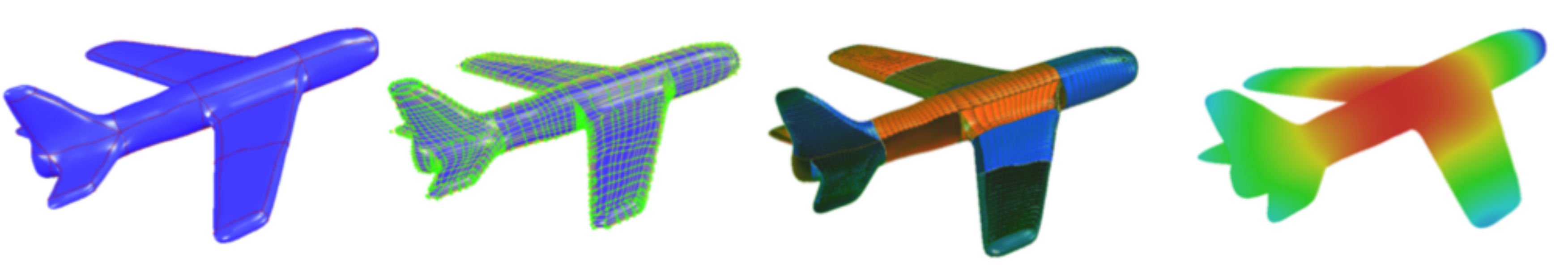}
\\ (a) Airplane model I
\end{minipage}\\
\begin{minipage}[t]{6.0in}
\centering
\includegraphics[width=6.5in]{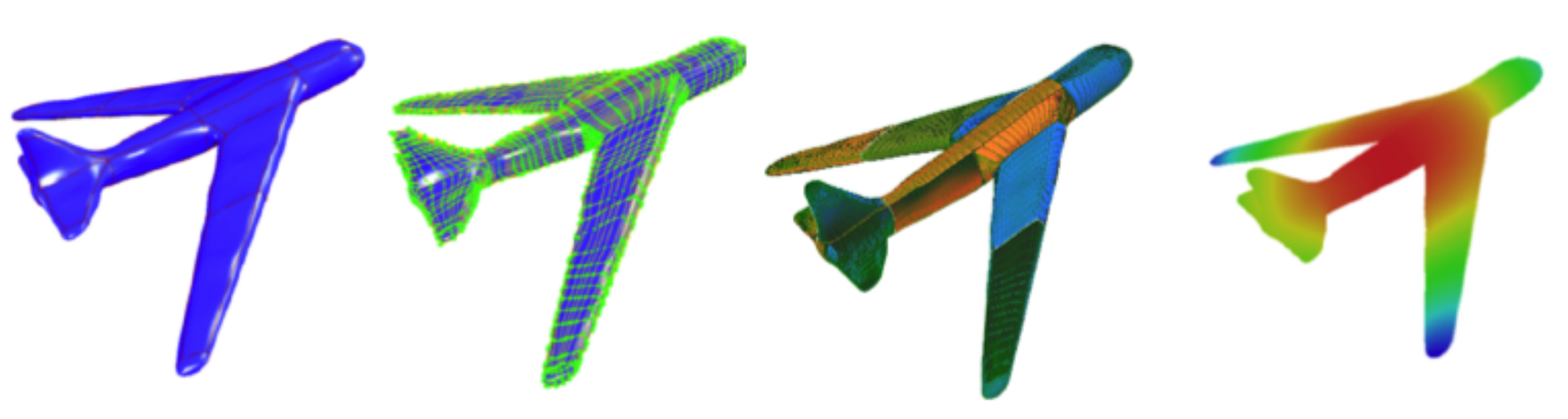}
\\ (b) Airplane model II
\end{minipage}\\
\begin{minipage}[t]{6.0in}
\centering
\includegraphics[width=6.5in]{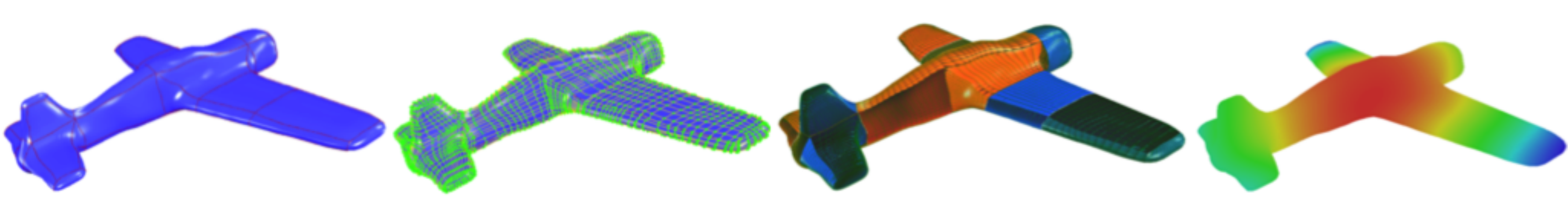}
\\ (c) Airplane model III
\end{minipage}\\
\begin{minipage}[t]{6.0in}
\centering
\includegraphics[width=6.5in]{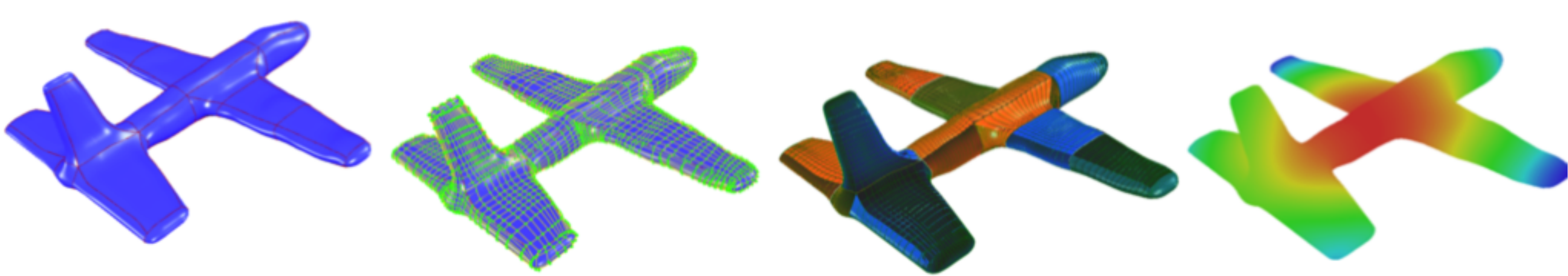}
\\ (d) Airplane model IV
\end{minipage}\\
\caption{Computation reuse in IGA  on airplane models with consistent volumetric parameterization. }
\label{fig:airplane}
\end{figure*} 

\begin{figure*}
\centering
\begin{minipage}[t]{6.0in}
\centering
\includegraphics[width=4.5in]{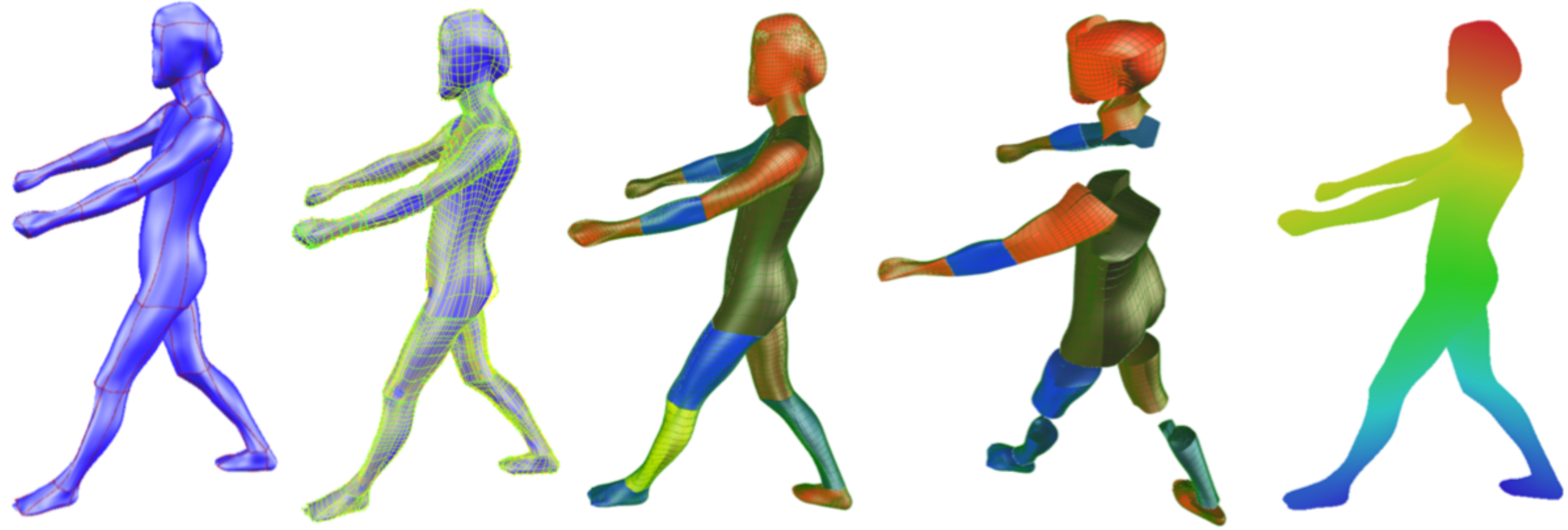}
\\ (a) Human body model I
\end{minipage}\\
\begin{minipage}[t]{6.0in}
\centering
\includegraphics[width=4.5in]{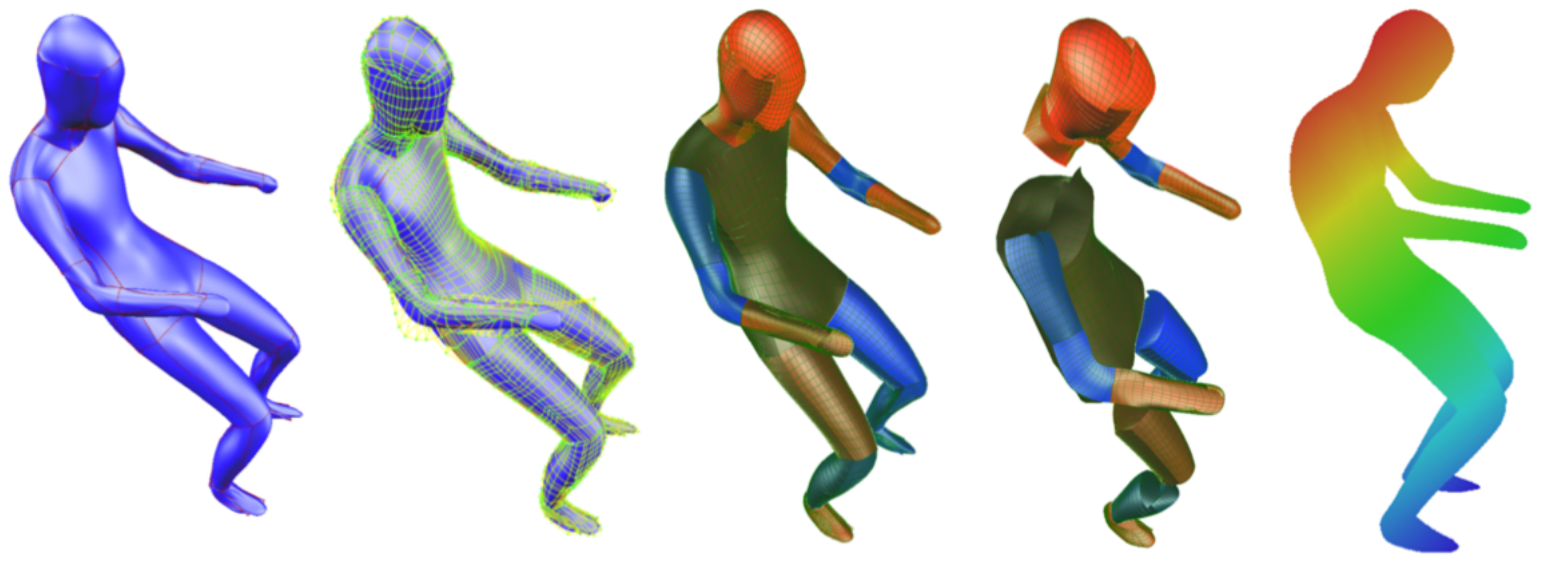}
\\ (b) Human body model II
\end{minipage}\\
\begin{minipage}[t]{6.0in}
\centering
\includegraphics[width=4.5in,height=2.in]{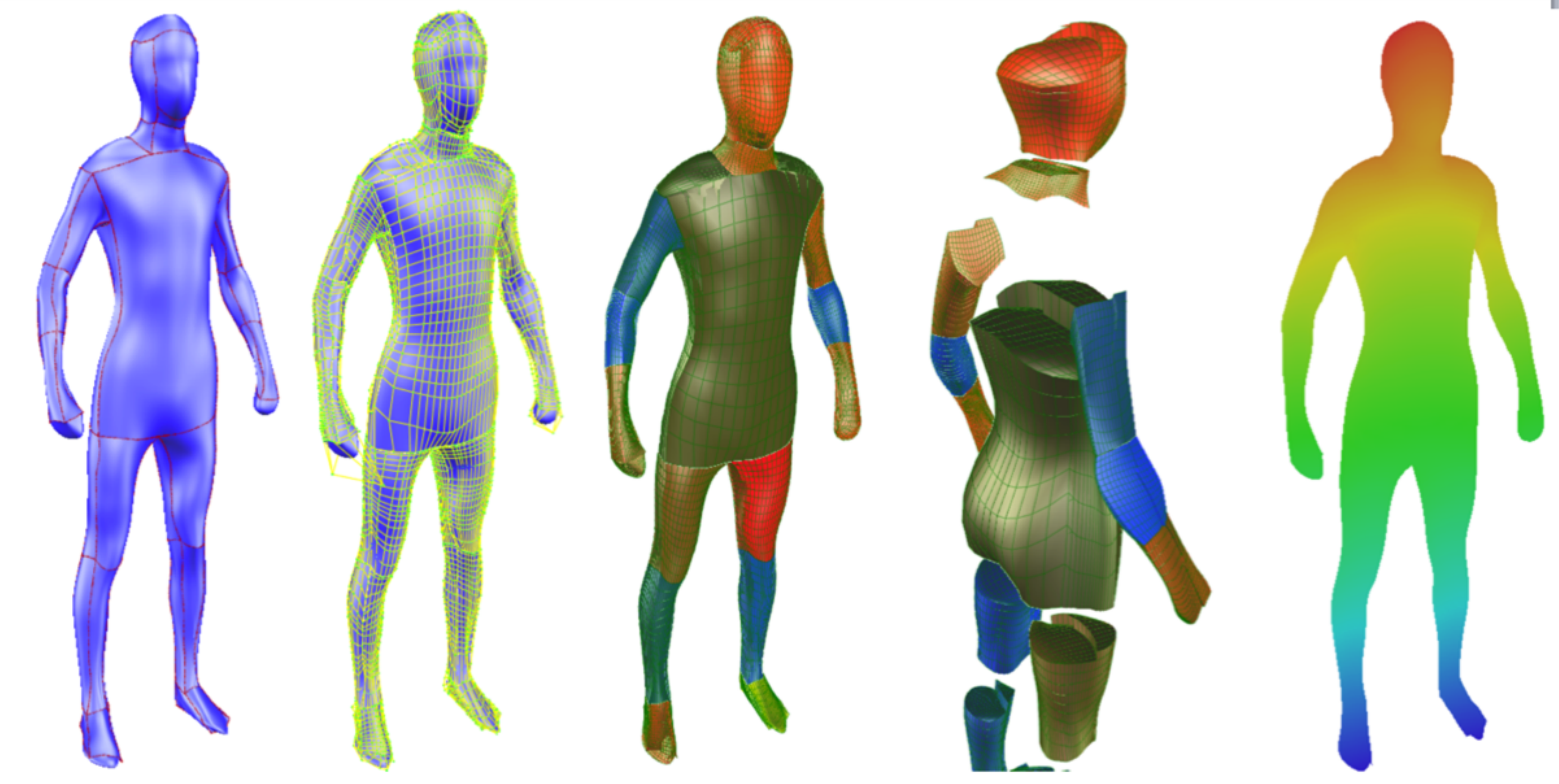}
\\ (c)Human body model III
\end{minipage}\\
\begin{minipage}[t]{6.0in}
\centering
\includegraphics[width=4.5in,height=1.8in]{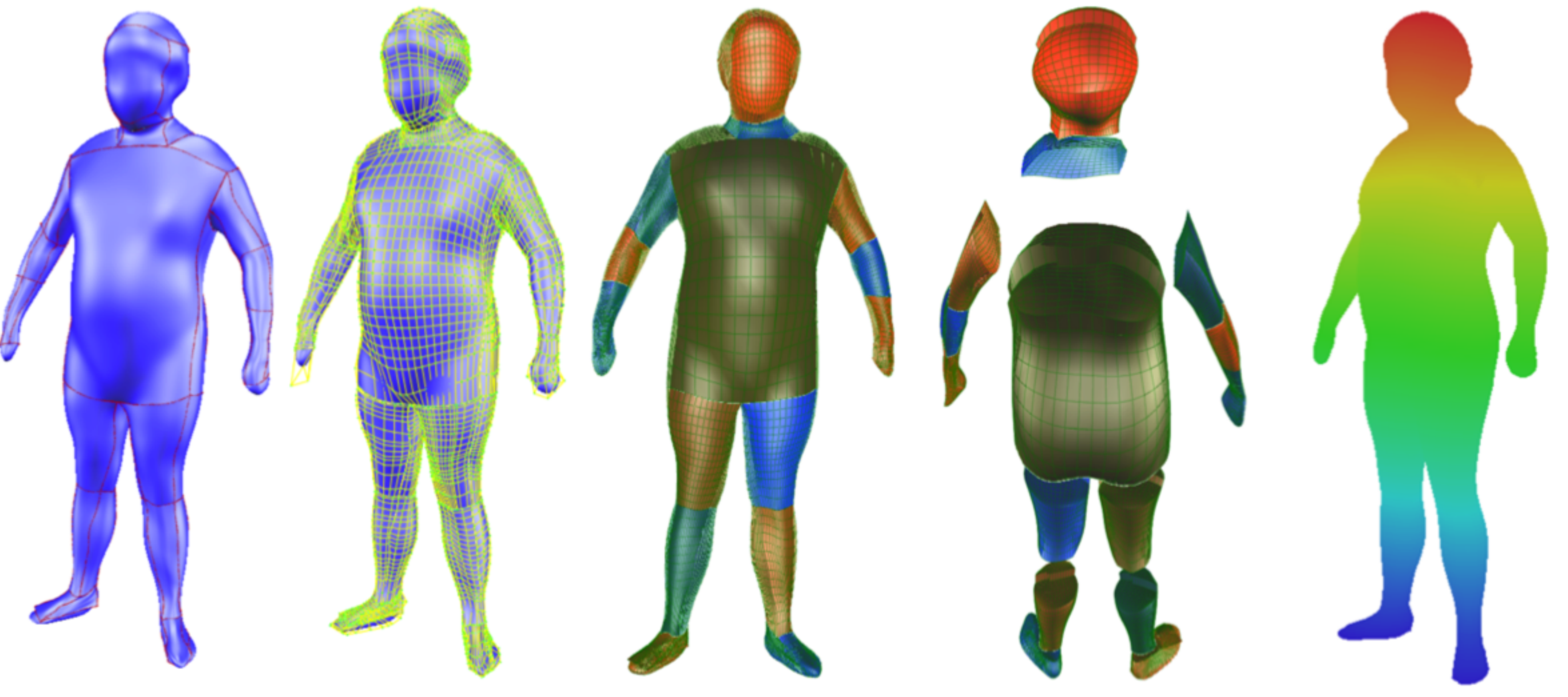}
\\ (d) Human body model IV
\end{minipage}\\
\caption{Computation reuse in IGA on human body models with consistent volumetric parameterization. }
\label{fig:human}
\end{figure*}

\begin{figure*}[t]
\centering
\begin{minipage}[t]{2.0in}
\centering
\includegraphics[width=2.1in]{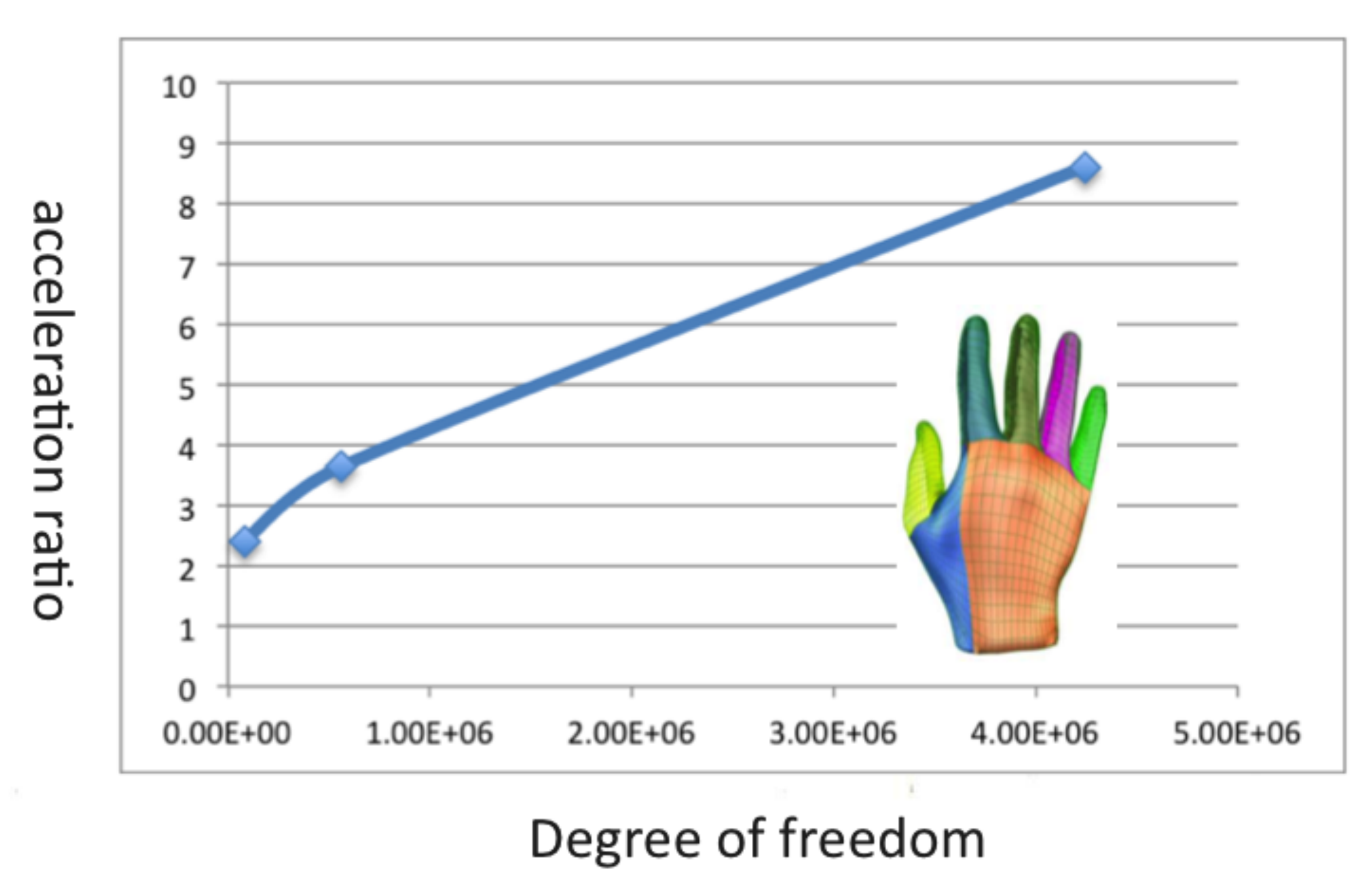}
\\ (a) hand models
\end{minipage}
\begin{minipage}[t]{2.0in}
\centering
\includegraphics[width=2.1in]{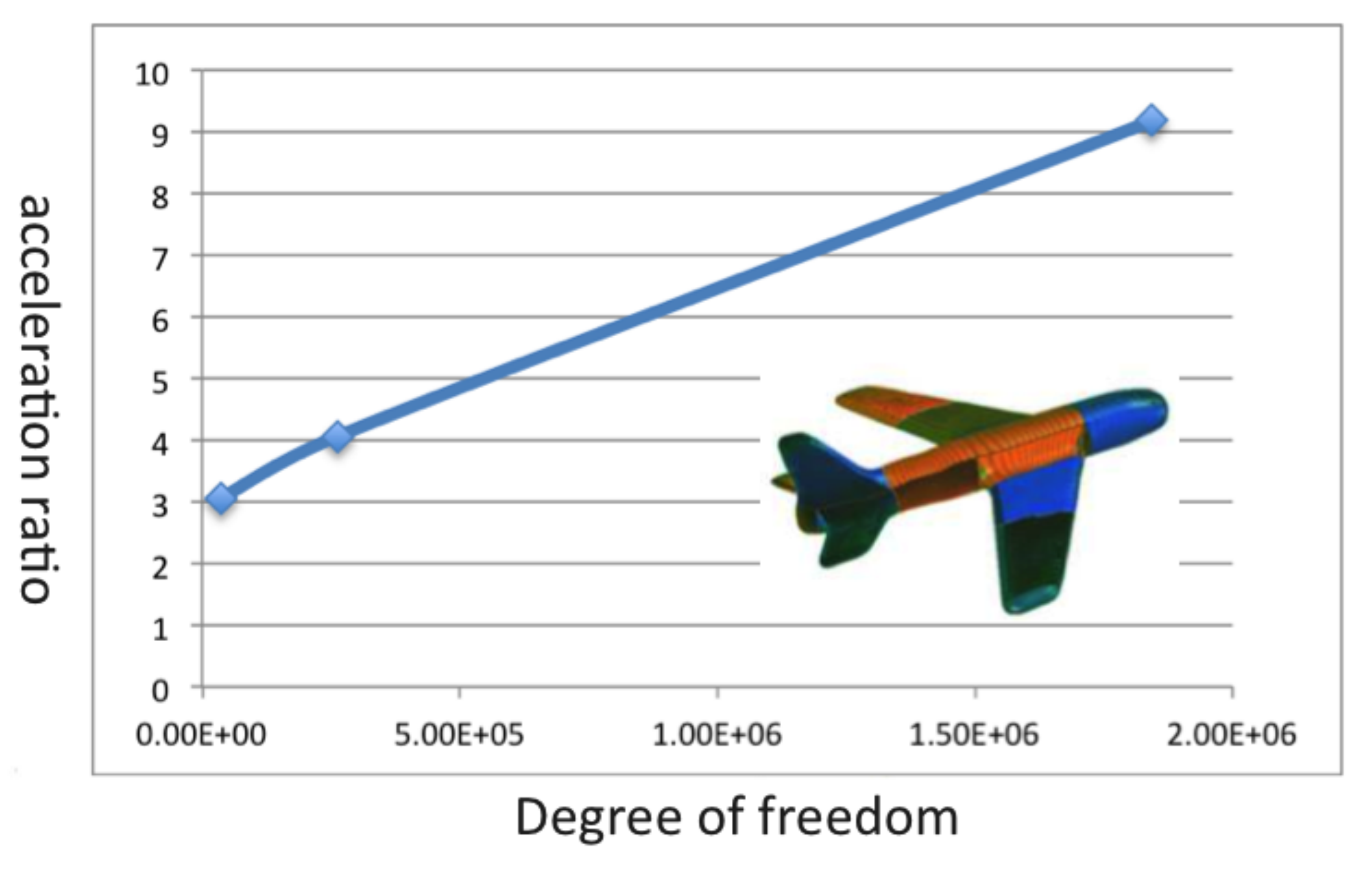}
\\ (b) airplane models
\end{minipage}
\begin{minipage}[t]{2.1in}
\centering
\includegraphics[width=2.1in]{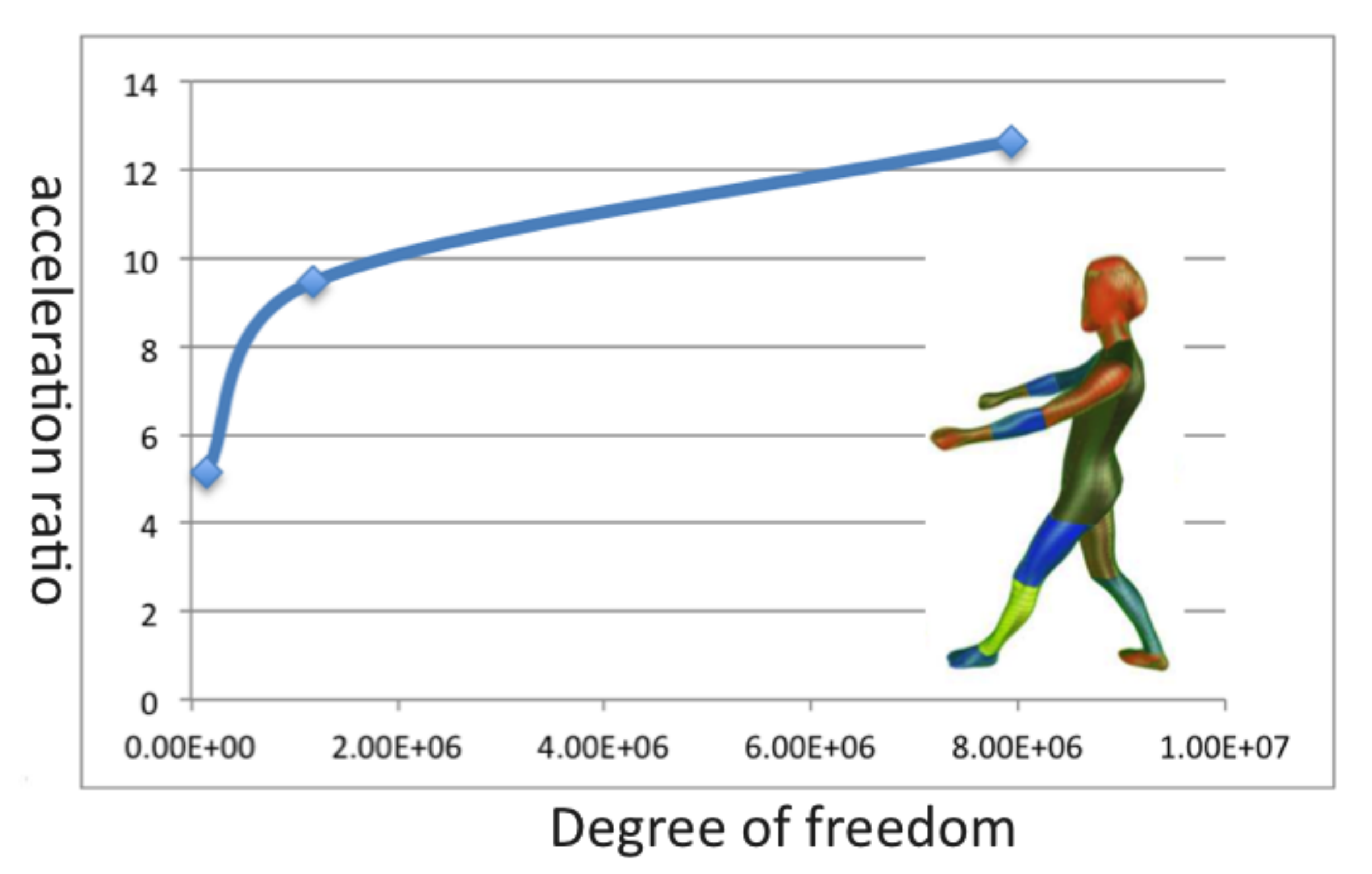}
\\ (c) human models
\end{minipage}\\
\caption{Acceleration ratio and the degree of freedom for examples in Figs.\ref{fig:hand}-\ref{fig:human}. The horizontal axis denotes the degree of freedom, and the vertical axis illustrates the acceleration ratio.}
\label{fig:datacurve}
\end{figure*}

\section{Conclusion}\label{sec:conclude}

In this paper, in order to improve the computational efficiency of 
isogeometric analysis, the concept of \emph{computation reuse} is proposed for 
three-dimensional models with similar semantic features.  
For a set of models with consistent topology, a CSRBF-based 
method is firstly applied to construct topology-consistent
volumetric B-spline parameterization from given template domains. 
After obtaining the consistent volumetric parameterization,  we propose an efficient
quadrature-free method to compute the entries of stiffness
matrix with the techniques of \Bezier extraction and polynomial approximation.
With the help of our method, evaluation on the stiffness matrix and
imposition of the boundary conditions can 
be pre-computed and partially reused for models having consistent
volumetric parametrization. Effectiveness of the proposed methods has been verified on
several examples with complex geometry, and the computation efficiency similar to classical 
finite element method can be achieved for IGA on these models.

We plan to test the proposed computation reuse approach on other
physical simulation problems in the future.  Problems such as linear elasticity and flow simulation for a set of models with consistent topology have been
widely practiced in the design of a family of products, which will
potentially be benefited from our approach.  On the other aspect, this
technique can be used as the inner loop of physics-based shape
optimization , in which the computation can be significantly speeded
up after the first isogeometric analysis on the original shape.

% use section* for acknowledgement
 \section*{Acknowledgment}
Gang Xu is partially supported by the National Nature Science Foundation of China (No.
61472111) and Zhejiang Provincial Natural Science Foundation of China
(No. LR16F020003).

\bibliographystyle{abbv}

% that's all folks
\end{document}